\crefname{hypothesis}{Hypothesis}{Hypotheses}
\title{Error estimates of a training-free diffusion model for high-dimensional sampling\thanks{
This material is based upon work supported by the U.S. Department of Energy, Office of Science, Office of Advanced Scientific Computing Research, Applied Mathematics program under the contract ERKJ443, Office of Fusion Energy Science, and Scientific Discovery through Advanced Computing (SciDAC) program, at the Oak Ridge National Laboratory, which is operated by UT-Battelle, LLC, for the U.S. Department of Energy under Contract DE-AC05-00OR22725. Additional support is provided by the U.S. Department of Energy under Contracts DE-SC0025412 and DE-SC0025649, and by the U.S. National Science Foundation under Grant DMS-2142672. }}
\author{Pengjun Wang\thanks{Department of Mathematics and Statistics, Auburn University, Auburn, AL.}
\and
Zezhong Zhang\footnotemark[2]
\and
Minglei Yang\thanks{Fusion Energy Division, Oak Ridge National Laboratory,
 Oak Ridge, TN.}
 \and
Feng Bao\thanks{Department of Mathematics, Florida State University,
 Tallahassee, FL.}
\and
\newline
Yanzhao Cao\footnotemark[2]
\and
Guannan Zhang\thanks{Computer Science and Mathematics Division, Oak Ridge National Laboratory, Oak Ridge, TN}
}
\begin{document}

\maketitle

% REQUIRED
\begin{abstract}
Score-based diffusion models are a powerful class of generative models, but their practical use often depends on training neural networks to approximate the score function. Training-free diffusion models provide an attractive alternative by exploiting analytically tractable score functions, and have recently enabled supervised learning of efficient end-to-end generative samplers.
Despite their empirical success, the training-free diffusion models lack rigorous and numerically verifiable error estimates.
In this work, we develop a comprehensive error analysis for a class of training-free diffusion models used to generate labeled data for supervised learning of generative samplers. 
By exploiting the availability of the exact score function for Gaussian mixture models, our analysis avoids propagating score-function approximation errors through the reverse-time diffusion process and recovers classical convergence rates for ODE discretization schemes, such as first-order convergence for the Euler method. Moreover, the resulting error bounds exhibit favorable dimension dependence, scaling as $\mathcal{O}(d)$ in the $\ell_2$ norm and $\mathcal{O}(\log d)$ in the $\ell_\infty$ norm. Importantly, the proposed error estimates are fully numerically verifiable with respect to both time-step size and dimensionality, thereby bridging the gap between theoretical analysis and observed numerical behavior. 
\end{abstract}

% REQUIRED
\begin{keywords}
Training-free diffusion model, high-dimensional sampling, uncertainty quantification, error estimates, supervised learning
\end{keywords}

% REQUIRED
\begin{MSCcodes}
68Q25, 65C30, 65L99, 65G99 
\end{MSCcodes}

\section{Introduction}
% 1: DM background (however, need to train score function with NN, time consuming)
% 2: training-free dm: to get better effiicency for problems (dont need to train NN): paper, applications, (superwised learning), stress the algorithm is useful; mitivation: 1) the error analysis is lack 2) training-free score equiavlent to gmm score implicitly 
% 3: in this work, we do DM with gmm prior error analyisi: why use GMM: 1) gmm itself is good and useful; (inflation) 2) existing work already implicitly incorporte gmm: alpha(0) \neq 0;  early stopping with empirical initial is already using gmm; we rogorous prove error under explicit gmm asuumption
% 4: our reuslts : error bound
% 5: related work : error analysis under different settings/ the other GMM setting
% 6: main contribtions: detailed challenges overcome 

The diffusion models, inspired by thermodynamics modeling \cite{sohl2015deep}, have emerged as a powerful tool in the field of generative learning. 
The key idea of diffusion model is to use a forward process to progressively add noise to the training data, and learn to reverse the process to generate new samples from the target distribution. Several notable diffusion models have been developed in recent years such as denoising diffusion probabilistic models (DDPM) \cite{ho2020denoising}, denoising diffusion implicit models (DDIM) \cite{song2021denoising}, score-based generative modeling (SGM) \cite{song2020score}. In particular, the reverse process relies on access to the score of the noised data distribution, which in practice is approximated by a neural network during training. Nevertheless, learning an accurate score function with a neural network is delicate and often unstable, especially at low noise and in high dimensions. Moreover, sampling demands many reverse time steps that repeatedly call the approximate score function, making inference computationally expensive.

Beyond using training-based diffusion models, a growing line of training-free approaches seeks to run the reverse process without approximating the score function with a neural network. In effect, many of these methods are implicitly equivalent to assuming the target distribution is either a finite Gaussian mixture or a collection of point masses. While attractive for their simplicity and zero-training cost, these methods currently lack rigorous error guarantees. In our previous work in \cite{liu2024diffusion}, we proposed, for the first time, to exploit training-free score-based diffusion models to generate labeled data to learn an end-to-end generative model in a supervised learning fashion. The end-to-end generative model offers several significant advantages over diffusion models and normalizing flow models. First, unlike diffusion models, it does not require solving a reverse-time stochastic process to generate new samples, making it computationally efficient and comparable in efficiency to normalizing flow models. Second, unlike normalizing flow models, it does not require a strictly reversible architecture. This is because the availability of labeled data allows the model to be trained in a supervised learning framework, whereas normalizing flow models rely on unsupervised learning and must explicitly use the inverse mapping in the loss function. Because of these advantages, this approach has been successfully applied to amortized Bayesian inference for parameter estimation \cite{zhang2025exact, tatsuoka2025multifidelity, Fan2025GenAI4UQ, Lu2024DiffusionE3SM} and to learning transition distributions of stochastic dynamical systems \cite{Yang2026FlowMaps, Liu2025DiffusionSDE}. 
Despite the practical success of training-free diffusion models in enabling supervised learning of generative models, there remains a lack of numerically verifiable error estimates for such approaches. Although recent studies on the convergence analysis of diffusion models (see the detailed overview in Section~\ref{sec:review}) provide substantial theoretical insight, many error estimates do not reflect the error propagation behavior observed in numerical simulations. 

To bridge the gap between theory and practice, we develop rigorous error estimates for a specific class of training-free diffusion models used for supervised learning of generative samplers introduced in \cite{liu2024diffusion}. Specifically, we split the overall numerical error into three components. The first component is the data error, i.e., discrepancy between the ground truth target distribution and the Gaussian mixture model defined by the set of samples from the target distribution. The Gaussian mixture model can be viewed as an inflated empirical distribution, which is widely used  in ensemble-based uncertainty quantification, e.g., data assimilation \cite{Evensen2009,Evensen2022_Fundamentals,zhang2025iensf}. The second component comes from the discretization of the training-free diffusion model, where the Gaussian mixture model is treated as the target distribution. The third component is the error of the supervised learning of the final generative sampler using the labeled data obtained from the training-free diffusion model. Because the error bounds for the first and the third components are very well established, we focus on analyzing the second component in this work. 

Our error estimates exhibit several advantages over existing analyses in the literature. First, our approach completely avoids propagating score-function approximation errors through the reverse-time diffusion process. When a Gaussian mixture model is taken as the target distribution, the exact score function admits an analytical expression, ensuring that no score approximation error is accumulated during the diffusion. In addition, error free score function enables us to eliminate the singularity of the diffusion model.  As a result, we recover the classical convergence rates of ODE discretization schemes, such as first-order convergence when using the Euler method.
Second, the error-splitting strategy leads to a substantially improved dependence on the problem dimensionality for the second error component, namely $\mathcal{O}(d)$ in the $L_2$ norm and $\mathcal{O}(\log (d))$ in the $L_\infty$ norm. This improvement stems from avoiding the unfavorable dimension dependence typically introduced by score-function approximation errors.
Third, and most importantly, our error estimates are fully numerically verifiable, not only in terms of convergence with respect to the time-step size, but also with respect to the dimensionality.

The paper is organized as follows. In Section \ref{sec:Methodology}, we briefly introduce the score-based generative modeling and supervised learning of the generative samplers. Our main error estimates are provided in Section \ref{sec:analysis}. In Section \ref{sec:experiment}, we verify our theoretical results with comprehensive numerical experiments.

\subsection{Related work on error estimates for score-based diffusion models}\label{sec:review}

In the error analysis of score-based diffusion models, a central question is how the sampling error propagates when the reverse-time SDE (or the probability-flow ODE) is driven by an estimated score $\hat s_t(x)$ rather than the true score $s_t(x)=\nabla_x \log p_t(x)$. A common starting point is to assume an $\ell_2$-accurate score estimate along the forward process, e.g., $\mathbb{E}[\int_0^T \|\hat s_t(X_t)-s_t(X_t)\|^2\,dt] \le \varepsilon^2,$ and then derive an end-to-end discrepancy bound between the generated and target distributions. Early results under such $\ell_2$-type assumptions often exhibit constants that scale exponentially with the dimension, reflecting a pronounced curse of dimensionality~\cite{de2022convergence,block2020generative}. Subsequent work mitigates this dimension dependence by imposing stronger regularity on the score (or drift) field, most notably spatial Lipschitz continuity $\|s_t(x)-s_t(y)\|\le L\|x-y\|$. In this regime, stability estimates trade dimension dependence for dependence on the Lipschitz constant $L$, yielding bounds that are exponential in $L$~\cite{benton2023error} or, in more refined analyses, only polynomial in $L$~\cite{chen2022sampling,chen2023improved,chen2024probability}. Building on this line, some studies allow time-dependent regularity and assume controlled growth of the Lipschitz constant (or related stability parameters), for instance that it has at most linear growth in $x$ over time, to ensure integrability of stability factors along the dynamics and to extend guarantees to broader non-autonomous settings~\cite{gao2023wasserstein}. A related direction combines training error and sampling error into a single full error analysis, yielding end-to-end guarantees for diffusion-based generation~\cite{wang2024evaluating}.

To further relax explicit assumptions on the Lipschitz constant, alternative conditions have been proposed. One approach requires not only that $\hat s_t$ be close to $s_t$, but also that the Jacobian $\nabla_x \hat s_t$ be close to $\nabla_x s_t$, enabling sharper linearization-based stability arguments and avoiding worst-case dependence on a global Lipschitz constant~\cite{li2023towards,li2024accelerating,liang2025low}. Another strategy is algorithmic: early stopping (or restricting the reverse integration horizon) is used to limit the accumulation and amplification of score estimation errors, providing practical robustness without demanding strong global regularity~\cite{beyler2025convergence,benton2024nearly,huang2024convergence}. In parallel to these continuous domain strategies, algorithmic innovations have also extended to discrete-state diffusion, where high-order integrators have been developed to accelerate sampling while maintaining rigorous convergence guarantees~\cite{ren2025fast}. Finally, instead of placing direct assumptions on the score estimation error, some works exploit additional structure by assuming that the target distribution belongs to a Gaussian family~\cite{shah2023learning,liang2024unraveling,wu2024theoretical,chidambaram2024does,chen2024learning,gatmiry2024learning} (or is sufficiently close to Gaussian in an appropriate sense~\cite{li2025dimension}). Under this assumption, the score admits a special, low-complexity form (e.g., an affine function of $x$), which can be approximated more accurately from data. This structural advantage enables sharper analyses and dimension-friendly error bounds under weaker requirements on the learned score $\hat s_t$.

\section{Problem setting}
\label{sec:Methodology}
We aim  to estimate the numerical approximation error of a training-free diffusion model for supervised learning of a generative model for producing unlimited samples from $d$-dimensional random variable, defined by
\begin{equation}\label{eq:target}
    X \in \mathbb{R}^d \text{ and } X \sim p_X(x),
\end{equation}
where $p$ denotes the probability density function of the target random variable $X$. We are given a training dataset, denoted by
\begin{equation}\label{eq:data}
    \mathcal{D}_{\rm train} = \{x_1, x_2, \ldots, x_J\} \subset \mathbb{R}^d,
\end{equation}
which contains $J$ independent samples from the target distribution $p_X(x)$. From the computational perspective, the objective is to construct a parametric generative model of the following form
\begin{equation}\label{eq:transport}
    X = F(Y; \theta)\; \text{ with }\; Y \in \mathbb{R}^d,
\end{equation}
which is a transport map that transforms the reference random variable $Y$ (e.g., a standard Gaussian random variable) into the target random variable $X$. Once the parameter $\theta$ is determined through model training, unlimited samples of $X$ can be produced by first sampling from $Y$ and then passing those samples through the trained generative model in Eq.~\eqref{eq:transport}.

A key challenge in training the generative model $F(y; \theta)$ arises from the absence of labeled training data, which means there is no corresponding sample of $Y$ for each sample $x_j \in \mathcal{D}_{\rm train}$ in Eq.~\eqref{eq:data}. Consequently, the model $F(y; \theta)$ cannot be trained using supervised learning based on a mean square error (MSE) loss. Instead, an indirect loss derived from the change-of-variables formula, i.e.,
$p_X(x) = p_Y(F^{-1}(x))|\det {\rm D}(F^{-1}(x))|$,
is commonly used to train normalizing flow models in an unsupervised manner. The limitation of the unsupervised learning approach is that it requires specially designed reversible architectures for $F(y; \theta)$ to enable efficient backpropagation through the computation of $|\det {\rm D}(F^{-1}(x))|$. To address this challenge, our previous work \cite{liu2024diffusion} introduced a training-free diffusion model to generate labeled data such that  the generative model $F(y;\theta)$ can be trained in a supervised 
manner using the MSE loss, which eliminates the need for having a reversible architecture for $F(y;\theta)$ solely for training purposes.

% This problem has been extensively studied through various generative modeling approaches, including normalizing flows \cite{kobyzev2020normalizing, grathwohl2018ffjord} and diffusion models \cite{JMLR:v6:hyvarinen05a,10.1162/NECO_a_00142}. Section \ref{sec:train_diffusion} briefly reviews the training-free score-based diffusion model.

\subsection{Overview of the training-free diffusion model}\label{sec:train_diffusion}
We briefly review the algorithm to be analyzed in this paper, namely the training-free score-based diffusion model proposed in \cite{liu2024diffusion}, which generates labeled data for the supervised learning of the generative model in Eq.~\eqref{eq:transport}. Our algorithm is based on the score-based diffusion model, which comprises a forward process and a reverse process.
The forward process maps the target random variable $X$ to a standard Gaussian variable $Y \sim \mathcal{N}(0, \mathbf{I}_d)$ through the following SDE:
\begin{equation}\label{eq:forward_sde}
dZ_t = b(t) Z_{t} dt + \sigma(t) dW_t, \quad Z_0 = X, \; Z_1 = Y, \; t \in [0,1],
\end{equation}
where $W_t$ is a standard Brownian motion. Although there are many choices for the drift and diffusion coefficients $b(t)$ and $\sigma(t)$ in Eq.~\eqref{eq:forward_sde}, we focus on analyzing the diffusion model using the following definition of the coefficients, i.e., 
\begin{equation}\label{eq:coefficients}
b(t) = \frac{d \log \alpha_t}{dt}, \quad \sigma^2(t) = \frac{d \beta_t^2}{dt} - 2 \frac{d\log \alpha_t}{dt} \beta_t^2,
\end{equation}
where the processes $\alpha_t$ and $\beta_t$ are defined by
\begin{equation}\label{eq:alpha_beta}
\alpha_t = 1-t, \quad \beta_t^2 = t, \quad t \in [0,1].
\end{equation}
The reverse SDE for generating new samples of the target distribution is defined by
\begin{equation}\label{eq:reverse_sde}
dZ_t = \left[ b(t)Z_t - \sigma^2(t) S(Z_t, t)\right] dt + \sigma(t) d\overleftarrow{W}_t, \quad Z_0 = X, \; Z_1 = Y,
\end{equation}
where $\overleftarrow{W}_t$ is a reverse Brownian motion, $b(t)$ and $\sigma(t)$ adopt the same definition as in Eq.~\eqref{eq:coefficients}, 
and $S(Z_t, t)$ is the score function defined by
\begin{equation}\label{eq:score_def}
S(z_t, t) := \nabla_z \log q_{Z_t}(z_t),
\end{equation}
with $q_{Z_t}(z_t)$ denoting the marginal probability density function of $Z_t$ at time $t$ in Eq.~\eqref{eq:forward_sde}. When the score function in Eq.~\eqref{eq:score_def} is known or can be approximated, we can generate samples of $X$ by solving the reverse SDE in Eq.~\eqref{eq:reverse_sde} starting from a sample of $Y \sim \mathcal{N}(0, \mathbf{I}_d)$ as the terminal condition. Therefore, the central task in using the diffusion model is to approximate the score function.

{\em The training-free score estimator.}
Traditional diffusion models rely on training neural networks to approximate the score function, for example via score matching \cite{JMLR:v6:hyvarinen05a}. In contrast, we leverage the analytical structure of the forward process to derive a training-free score estimator. Substituting $q_{Z_t}(z_t) = \int_{\mathbb{R}^d} q_{Z_t|Z_0}(z_t | z_0) q_{Z_0}(z_0) dz_0$ into Eq.~\eqref{eq:score_def} and using the fact that 
\begin{equation}\label{eq:conditional}
q_{Z_t|Z_0}(z_t | z_0) = \phi(z_t; \alpha_t z_0, \beta_t^2 \mathbf{I}_d),
\end{equation}
with $\phi(z_t; \alpha_t z_0, \beta_t^2 \mathbf{I}_d)$ being the probability density function of the Gaussian distribution 
$\mathcal{N}(\alpha_t z_0, \beta_t^2 \mathbf{I}_d)$, we obtain
\begin{equation}\label{eq:score_integral}
S(z_t, t) = \int_{\mathbb{R}^d} \left(-\frac{z_t - \alpha_t z_0}{\beta_t^2}\right) w_t(z_t, z_0) q_{Z_0}(z_0) dz_0,
\end{equation}
where the weight function is defined by
\begin{equation}\label{eq:weight_func}
w_t(z_t, z_0) := \frac{q_{Z_t|Z_0}(z_t | z_0)}{\int_{\mathbb{R}^d} q_{Z_t | Z_0'}(z_t | z_0') q_{Z_0}(z_0') dz_0'}.
\end{equation}
Given the dataset $\mathcal{D}_{\rm train}$ in Eq.~\eqref{eq:data} containing samples from $q_{Z_0}(z_0) := p_X(x)$, we approximate the integral in Eq.~\eqref{eq:score_integral} using the following Monte Carlo estimation:
\begin{equation}\label{eq:score_mc}
S(z_t, t) \approx \bar{S}(z_t, t) := \sum_{j=1}^{J} \left(-\frac{z_t - \alpha_t x_{j}}{\beta_t^2}\right) \bar{w}_t(z_t, x_{j}),
\end{equation}
where
\begin{equation}\label{eq:weight_mc}
\bar{w}_t(z_t, x_{j}) := \frac{q_{Z_t|Z_0}(z_t | x_{j})}{\sum_{j=1}^{J} q_{Z_t|Z_0}(z_t | x_{j})},
\end{equation}
with $q_{Z_t|Z_0}(z_t | x_{j})$ given by the Gaussian density function in Eq.~\eqref{eq:conditional}. With the score estimation in Eq.~\eqref{eq:score_mc}, we can directly solve the reverse SDE in Eq.~\eqref{eq:reverse_sde} to generate new samples of the target distribution $p_X(x)$. Nevertheless, the stochastic nature of the reverse process in Eq.~\eqref{eq:reverse_sde} cannot be used to generate labeled data for supervised learning of the map in Eq.~\eqref{eq:transport}, which will be addressed in the next subsection.

\subsection{Supervised learning of the generative model}\label{sec:learning}
We describe how to use the score representation given in Section \ref{sec:train_diffusion} to generate labeled data and use such data to train the generative model in Eq.~\eqref{eq:transport}.
Starting from the forward SDE in Eq.~\eqref{eq:forward_sde}, let $q_{Z_t}(z)$ denote the marginal probability density of $Z_t$ and write the score function as in Eq.~\eqref{eq:score_def}, i.e.,
$S(z,t):=\nabla_z \log q_{Z_t}(z)$.
The density function $q_{Z_t}$ satisfies the Fokker--Planck equation:
\begin{equation}\label{eq:fp}
  \partial_t q_{Z_t}(z) = - \nabla_z\!\cdot\!\big(b(t)\,z\,q_{Z_t}(z)\big) + \frac12 \sigma^2(t) \Delta_z q_{Z_t}(z).
\end{equation}
Using $\Delta_z q_{Z_t}=\nabla_z\!\cdot\!\big(\nabla_z q_{Z_t}\big)
=\nabla_z\!\cdot\!\big(S(z,t)\,q_{Z_t}(z)\big)$, we can rewrite Eq.~\eqref{eq:fp} in the flux form, i.e.,
\begin{equation}\label{eq:fp-score}
  \partial_t q_{Z_t}(z) = -\nabla_z\!\cdot\!\left(\big[b(t)\,z-\frac12\,\sigma^2(t)\,S(z,t)\big]\;q_{Z_t}(z)\right).
\end{equation}
Any deterministic flow $\dot z(t)=v(z(t),t)$ with the same marginals satisfies the continuity equation:
\begin{equation}\label{eq:ce}
  \partial_t q_{Z_t}(z)\;=\;-\,\nabla_z\!\cdot\!\big(v(z,t)\,q_{Z_t}(z)\big).
\end{equation}
Matching Eq.~\eqref{eq:fp-score} and Eq.~\eqref{eq:ce} yields the following form of the velocity term:
\[
  v(z,t)=b(t)\,z-\dfrac{1}{2}\,\sigma^2(t)\,S(z,t),
\]
and the probability-flow ODE driven by the velocity term is introduced as
\begin{equation}\label{eq:ode}
    dz_t = \left[ b(t)z_t - \frac{1}{2}\sigma^2(t) S(z_t, t)\right] dt,
\end{equation}
whose trajectories has the same marginal distribution as the reverse SDE in Eq.~\eqref{eq:reverse_sde}. The reverse ODE defines a much smoother functional relationship between the initial state and the terminal state than that defined by the reverse SDE in Eq.~\eqref{eq:reverse_sde}, which can be used to generate the desired labeled data.

% We describe how to use the score representation given in Section \ref{sec:train_diffusion} to generate labeled data and use such data to train the generative model in Eq.~\eqref{eq:transport}. Specifically, we use the corresponding ordinary differential equation (ODE), defined by
% \begin{equation}\label{eq:ode}
%     d{Z}_t = \left[ b(t){Z}_t - \frac{1}{2}\sigma^2(t) S(Z_t, t)\right] dt\; \text{ with }\; Z_0 = X \text{ and } Z_1 = Y,
% \end{equation}
% whose trajectories has the same marginal distribution as the reverse SDE in Eq.~\eqref{eq:reverse_sde}. The reverse ODE defines a much smoother function relationship between the initial state and the terminal state than that defined by the reverse SDE in Eq.~\eqref{eq:reverse_sde}.  

To generate labeled data to train the generator in Eq.~\eqref{eq:transport}, we first draw $M$ samples of $Y$, denoted by $\mathcal{Y} = \{y_1, \ldots, y_M\}$ from the standard normal distribution. For $m=1, \ldots,M$, we solve the reverse ODE in Eq.~\eqref{eq:ode} and collect the state $Z_0 | y_m$, where the score function is computed using Eq.~\eqref{eq:score_mc} and Eq.~\eqref{eq:weight_mc}, and the dataset $\mathcal{D}_{\rm train}$ in Eq.~\eqref{eq:data}. The labeled training dataset is denoted by 
\begin{equation}\label{eq:label}
    \mathcal{D}_{\rm label} := \left\{ (y_m, x_m) : x_m = Z_0|y_m \, \text{ for }\; m = 1, \ldots, M\right\},
\end{equation}
where $x_m$ is obtained by solving the reverse ODE in Eq.~\eqref{eq:ode}.
% We remark that the $\{x_m\}_{m=1}^M$ in the generated labeled training set $\mathcal{D}_{\rm train}$ is not a subset of the original training set $\mathcal{X}$, but $\{x_m\}_{m=1}^M$ follows the same distribution as the target random variable $X$ when both the number of time steps for solving the reverse-time SDE and the number of training samples in $\mathcal{X}$ go to infinity. An illustration comparing $\mathcal{X}$ and $\mathcal{D}_{\rm train}$ is given in Figure \ref{fig:2d}. Moreover, the number of samples in $\mathcal{D}_{\rm train}$ can be much larger than the size of $\mathcal{X}$, which could help improve the stability and alleviate overfitting in training the final generator $F(\cdot; \theta)$.
%
% We note that the $x_m$'s in $\mathcal{D}_{\rm train}$ 
% may not belong to $\mathcal{X}$ and $M$ could be arbitrarily large. 
%
After obtaining $\mathcal{D}_{\rm label}$, we can use it to train the generative model $F(y; \theta)$ in Eq.~\eqref{eq:transport} using supervised learning with the MSE loss. It has been numerically demonstrated that the supervised learning approach for training the generative models have shown promising potential in several uncertainty quantification tasks \cite{Yang2026FlowMaps, Liu2025DiffusionSDE, Fan2025GenAI4UQ, Lu2024DiffusionE3SM}. However, there is no error estimate to show the convergence rate of the discretization scheme for the training-free diffusion model, specifically for the reverse ODE in Eq.~\eqref{eq:ode}, which motivated us to conduct the error analysis presented in Section \ref{sec:analysis}.

\section{Error estimates of the training-free diffusion model}\label{sec:analysis}
The algorithm described in Section \ref{sec:learning} indicates that the accuracy of the generator $F(y;\theta)$ in Eq.~\eqref{eq:transport}, trained via supervised learning, is strongly influenced by the quality of the labeled data $\mathcal{D}_{\rm label}$ in Eq.~\eqref{eq:label}, which is obtained by solving the reverse ODE in Eq.~\eqref{eq:ode}. The main contribution of this paper is to provide error estimates for the discretization of the reverse ODE. Our error analysis seeks to answer the following central questions:
\vspace{-0.1cm}
\begin{itemize}[leftmargin=20pt]\itemsep0.2cm
    \item \emph{How does the error from the score estimation in Eq.~\eqref{eq:score_mc} accumulate through the time-stepping scheme and affect the final convergence rate?}
    
    \item \emph{Does the singularity of the drift and diffusion coefficients $b(t)$ and $\sigma(t)$ defined in Eq.~\eqref{eq:coefficients} affects the accuracy and stability in solving the reverse ODE?} 

    \item \emph{How does the discretization error of the reverse ODE depend on the dimensionality $d$ of the target random variable $X \in \mathbb{R}^d$ in Eq.~\eqref{eq:target}?}
    
\end{itemize}
\vspace{0.2cm}
To address the above questions, we first introduce an important assumption used throughout the rest of the error analysis. 

\begin{assum}\label{assum1}
Given the training dataset $\mathcal{D}_{\rm train}$ in Eq.~\eqref{eq:data}, we assume that the probability density function of the state $Z_0$ in the reverse ODE in Eq.~\eqref{eq:ode} at $t=0$ is a Gaussian mixture model defined by
\begin{equation}\label{eq:gmm}
q_{Z_0}(z_0) = \sum_{j=1}^J P_\xi(j)\, p_{Z_0|\xi}(z_0|j) =  \frac{1}{J} \sum_{j=1}^J \phi(z_0; z_0^j, \Sigma),
\end{equation}
where $\xi$ is a discrete random variable taking values in $\{1, \ldots, J\}$, $P_\xi(j) := 1/J$ is the probability of the event $\xi=j$, $z_0^j$ denotes the $j$-th sample from the training dataset $\mathcal{D}_{\rm train}$, and $p_{Z_0|\xi}(z_0|j):=\phi(z_0; z_0^j, \Sigma)$ represents the probability density function of a Gaussian distribution with mean $z_0^j$ and covariance matrix $\Sigma$. 
\end{assum}

The Gaussian mixture model in Eq.~\eqref{eq:gmm} can be interpreted as an ``inflated'' version of the empirical distribution represented by the samples in the training dataset $\mathcal{D}_{\rm train}$. Assumption \ref{assum1} is usually referred to as the \emph{inflation strategy}, which is widely used in ensemble-based data assimilation methods \cite{Evensen2009,Evensen2022_Fundamentals}. The inflation approach serves two complementary purposes in uncertainty quantification. First, it acts as a regularization mechanism that alleviates the underestimation of uncertainty commonly encountered in high-dimensional systems with limited ensemble size. By artificially broadening the ensemble spread, inflation mitigates variance collapse and promotes better
representation of the true uncertainty. Second, from a generative AI perspective, inflation helps alleviate model memorization by encouraging diversity in the learned representations. By introducing controlled stochasticity or variance inflation into the training dataset, we can reduce overfitting to training samples and enhances generalization, enabling more reliable synthesis of unseen states or scenarios.

Based on Assumption \ref{assum1}, we can split the total discretization error of the reverse ODE in Eq.~\eqref{eq:ode} into the following three parts:
\begin{equation}\label{eq:error_split}
\begin{aligned}
    & \;z_0^{\rm true}(z_1) - F(z_1;\theta) \\[5pt]
    = &\; \underbrace{z_0^{\rm true}(z_1) - z_0(z_1)}_{\text{Data error}}\;\; +\;\; \underbrace{z_0(z_1) - z^{K}(z_1)}_{\shortstack{\scriptsize Reverse ODE \\ \scriptsize  discretization error}}\;\; +\; \underbrace{z^{K}(z_1) - F(z_1;\theta)}_{\text{\scriptsize Supervised learning error}}, 
\end{aligned}
\end{equation}
where $z_0^{\rm true}(z_1)$ is ground truth solution of the reverse ODE in Eq.~\eqref{eq:ode} with $S(z_t,t)$ being the score function in Eq.~\eqref{eq:score_integral} defined by the target density function 
$p_X(x)$ in Eq.~\eqref{eq:target}, $F(z_1;\theta)$ is the predicted solution of the generative model in Eq.~\eqref{eq:transport} trained by the supervised learning approach discussed in Section \ref{sec:learning}; $z_0(z_1)$ is the solution of the reverse ODE in Eq.~\eqref{eq:ode} with $S(z_t,t)$ being the score function in Eq.~\eqref{eq:score_integral} defined by the Gaussian mixture model in Eq.~\eqref{eq:gmm}, $z^K(z_1)$ is the approximation of $z_0(z_1)$ using the numerical discretization scheme introduced in Section \ref{sec:discrete} with $K$ discrete time steps. 

We observe that data error in Eq.\eqref{eq:error_split} is solely determined by Assumption \ref{assum1} and the size of the training set $\mathcal{D}_{\rm train}$ and the supervised learning error is solely determined by classic function approximation theory. Importantly, neither of these error components depend on the diffusion model itself. Consequently, in order to address the three questions posed at the beginning of Section~\ref{sec:analysis} concerning the training-free diffusion model, we focus the remainder of Section~\ref{sec:analysis} on analyzing the reverse ODE discretization error. On the other hand, the error decomposition in Eq.~\eqref{eq:error_split} plays a crucial role in achieving only mild dependence of the resulting error bounds on the dimensionality, and in rendering the bounds fully numerically verifiable.

\subsection{Discretization of the reverse ODE}\label{sec:discrete}
%
% Given prior samples \(\{z_0^i\}_{i=1}^N \sim Z_0\), we define the prior distribution of \(\bar{Z}_0\) as a Gaussian mixture model:  
% \begin{equation}
%     P_{\bar{Z}_0}(z_0) = \sum_{i=1}^{N} P_{\bar{Z}_0 | \xi}(z_0 \mid i) P_\xi(i),
% \end{equation}
% where \( \xi \) is a latent variable with  
% \begin{equation}
%     P_\xi(i) = \frac{1}{N}, \quad \text{for } i = 1, \ldots, N,
% \end{equation}
% and the conditional distribution \( \bar{Z}_0 | \xi \) is given by
% \begin{equation}\label{barz0|xi}
%     P_{\bar{Z}_0 | \xi}(z_0 | i) = \mathcal{N}(z_0; z_0^i, \Sigma).
% \end{equation}

% Substituting these into the definition, we obtain  
% \begin{equation}
%     P_{\bar{Z}_0}(x) = \frac{1}{N} \sum_{i=1}^{N} \mathcal{N}(x; z_0^i, \Sigma).
% \end{equation}

We describe the discretization of the reverse ODE in Eq.~\eqref{eq:ode} and show that the  singularity in the drift and diffusion coefficients can be removed once the score function is incorporated, so it poses no numerical difficulty. Substituting the expressions for $b(t)$ and $\sigma(t)$ from Eq.~\eqref{eq:coefficients} into Eq.~\eqref{eq:ode}, we obtain
\begin{equation}\label{eq:ode3.1}
    dz_t =\Bigg[\frac{1}{t-1} z_t-\frac{1+t}{2(1-t)}{S}(z_t,t)\Bigg]dt, 
\end{equation}
where both the drift and diffusion coefficients are singular as $t\rightarrow 1$. 
% Next, we derive a closed-form expression for $S(z_t,t)$ and present the explicit Euler discretization for Eq.~\eqref{eq:ode3.1} under Assumption~\ref{assum1}.
%
% To numerically solve the ODE \eqref{eq:ode3.1}, we adopt the explicit Euler method. The resulting discretization scheme can be written as
% \begin{equation}\label{euler}
% \begin{aligned}
%     z^{k+1} =& z^k - \Bigg[-\frac{1}{kh} z^k-\frac{2-kh}{2kh}S(z^k,1-kh)\Bigg]h,\\
%     =&\frac{k+1}{k}z^k + \frac{2-kh}{2k}S(z^k,1-kh),
% \end{aligned}   
% \end{equation}
% where $K = 1/h,\ k = 0, 1, \ldots, K-1$ and the initial state $z^0$ is sampled from the standard normal distribution $\mathcal{N}(0, \mathbf{I}_d)$.
%
Since the solutions to the reverse ODE in Eq.~\eqref{eq:ode3.1} and the reverse SDE in Eq.~\eqref{eq:forward_sde} share the same marginal distributions, we have
\begin{equation}
    Z_t = (1-t)Z_0 + \sqrt{t}\varepsilon_0,
\end{equation}
where the random variable $\varepsilon_0 \sim \mathcal{N}(0, \mathbf{I}_d)$ is independent with $Z_0$ and the latent variable $\xi$ in Eq.~\eqref{eq:gmm}. The variable $Z_t$, conditionally on $\xi$, is expressed as
\begin{equation}
    Z_t|\xi = (1-t)Z_0|\xi + \sqrt{t}\varepsilon_0.
\end{equation}
Then we can write the random vector $(Z_0|\xi, Z_t|\xi)^T$ jointly as
\begin{equation}\label{eq:z0zt|xi}
    \begin{aligned}
        \begin{pmatrix}
        Z_0|\xi\\
        Z_t|\xi
        \end{pmatrix}
        &=
        \begin{pmatrix}
            \mathbf{I}_d & 0\\
            (1-t)\mathbf{I}_d & \sqrt{t}\mathbf{I}_d
        \end{pmatrix}
        \begin{pmatrix}
            Z_0|\xi\\
            \varepsilon_0
        \end{pmatrix}.\\
    \end{aligned}
\end{equation}
Combining Eq.~\eqref{eq:gmm} and Eq.~\eqref{eq:z0zt|xi}, we have
\begin{equation}\label{barz0zt_2}
        \begin{pmatrix}
        Z_0|\xi = j\\
        Z_t|\xi = j
        \end{pmatrix}
        \sim
        \begin{pmatrix}
            \mathbf{I}_d & 0\\
            (1-t)\mathbf{I}_d & \sqrt{t}\mathbf{I}_d
        \end{pmatrix}
        \mathcal{N}\left(
        \begin{pmatrix}
            z_0^j\\
            0
        \end{pmatrix},
        \begin{pmatrix}
            \Sigma & 0\\
            0 & \mathbf{I}_d
        \end{pmatrix}\right),
\end{equation}
% \begin{equation}\label{barz0zt_2}
%     \begin{aligned}
%         \begin{pmatrix}
%         Z_0|\xi = j\\
%         Z_t|\xi = j
%         \end{pmatrix}
%         \sim&
%         \begin{pmatrix}
%             \mathbf{I}_d & 0\\
%             (1-t)\mathbf{I}_d & \sqrt{t}\mathbf{I}_d
%         \end{pmatrix}
%         \mathcal{N}\left(
%         \begin{pmatrix}
%             z_0^j\\
%             0
%         \end{pmatrix},
%         \begin{pmatrix}
%             \Sigma & 0\\
%             0 & \mathbf{I}_d
%         \end{pmatrix}\right)\\
%         =&
%         \mathcal{N}\left(
%         \begin{pmatrix}
%             z_0^j\\
%             (1-t)z_0^j
%         \end{pmatrix},
%         \begin{pmatrix}
%             \Sigma & (1-t)\Sigma\\
%             (1-t)\Sigma & (1-t)^2\Sigma +t\mathbf{I}_d
%         \end{pmatrix}\right).\\
%     \end{aligned}
% \end{equation}
from which we have
$ Z_t|(\xi = j)
        \sim
        \mathcal{N}((1-t)z_0^j,(1-t)^2\Sigma +t\mathbf{I}_d)$. 
The marginal probability density of $Z_t$ is the equally weighted mixture defined by
\begin{equation}\label{eq:Pzt}
\begin{aligned}
     p_{Z_t}(z_t) = &\; \sum_{j=1}^{J} P_{Z_t | \xi}(z_0 \mid j) P_\xi(j)\\
     = &\;  \frac{1}{N} \sum_{j=1}^{J} \phi\left(z_t ; (1-t)z_0^j,(1-t)^2\Sigma +t\mathbf{I}_d\right),
\end{aligned}
\end{equation}
where $\phi(z_t ; (1-t)z_0^j,(1-t)^2\Sigma +t\mathbf{I}_d)$ denotes the probability density function of a Gaussian distribution with mean $(1-t)z_0^j$ and covariance matrix $(1-t)^2\Sigma +t\mathbf{I}_d$.

According to Eq.~\eqref{eq:Pzt}, we can derive the exact score function $S$ in Eq.~\eqref{eq:score_def} as
\begin{equation}\label{eq:score_1}
\begin{aligned}
    S(z_t,t)
    =&\; -\left((1-t)^2\Sigma +t\mathbf{I}_d\right)^{-1}z_t\\ 
    & \;+ (1-t)\left((1-t)^2\Sigma +t\mathbf{I}_d\right)^{-1}\sum_{j=1}^{J}z_0^j\,w_j(z_t,t).
\end{aligned}
\end{equation}
For each component index $j=1, \dots, J$, we define a simplified notation
\begin{equation}\label{eq:ej}
    e_j(z_t,t)
    := \exp\!\Big(
        -\tfrac{1}{2}\big(z_t-(1-t)z_0^j\big)^\top
        \big((1-t)^2\Sigma +t\mathbf{I}_d\big)^{-1}
        \big(z_t-(1-t)z_0^j\big)
    \Big),
\end{equation}
such that the corresponding normalized weights in Eq.~\eqref{eq:score_1} can be expressed by
\begin{equation}\label{eq:wi}
    w_j(z_t,t)
    = \frac{e_j(z_t,t)}{\sum_{j'=1}^{N} e_{j'}(z_t,t)}.
\end{equation}

On the other hand, because $\Sigma$ is positive definite, $((1-t)^2\Sigma + t\mathbf{I}_d)$ is positive definite for all $t\in[0,1]$, and hence invertible. In particular, the score function has no singularity at $t=0$.
The associated probability flow ODE in Eq.~\eqref{eq:ode3.1} becomes
\begin{equation}\label{eq:ode3.1.1}
\begin{aligned}
    dz_t =&\Bigg[\frac{1}{t-1} z_t-\frac{1+t}{2(1-t)}{S}(z_t,t)\Bigg]dt\\[8pt]
    =&\Bigg[\frac{1}{t-1} z_t-\frac{1+t}{2(1-t)}\Bigg(-((1-t)^2\Sigma +t\mathbf{I}_d)^{-1}z_t\\
    &+ (1-t)((1-t)^2\Sigma +t\mathbf{I}_d)^{-1}\sum_{j=1}^{J}z_0^jw_j(z_t,t)\Bigg)\Bigg]dt\\[8pt]
    =&\Bigg[\frac{1}{2}(\mathbf{I}_d-2(1-t)\Sigma)((1-t)^2\Sigma +t\mathbf{I}_d)^{-1}z_t\\
    &- \frac{1+t}{2}((1-t)^2\Sigma +t\mathbf{I}_d)^{-1}\sum_{j=1}^{J}z_0^jw_j(z_t,t)\Bigg]dt\\
\end{aligned}   
\end{equation}
with the initial condition $z_1\sim \mathcal{N}(0, \mathbf{I}_d)$.
%
% Although the right-hand side of Eq.~\eqref{eq:ode3.1.1} appears to be singular at $t=1$, substituting the score Eq.~\eqref{eq:score_1} shows that the $(t-1)^{-1}$ terms cancel and the drift coefficients extend continuously to $t=1$. The apparent singularity is removable at the coefficient level.
%
Although the right-hand side of Eq.~\eqref{eq:ode3.1.1} appears singular as $t \to 1$, substituting the score expression in Eq.~\eqref{eq:score_1} shows that the $(t-1)^{-1}$ terms are fully compensated by corresponding terms in the score. Consequently, the drift coefficients extend continuously to $t=1$, and the apparent singularity is entirely removable at the coefficient level.

To compute numerical solutions of the ODE in Eq.~\eqref{eq:ode3.1.1}, we adopt the explicit Euler method. The resulting discretization scheme takes the form
\begin{equation}\label{eq:euler}
\begin{aligned}
    z^{k+1} =& z^k - \Bigg[\frac{1}{2}(\mathbf{I}_d-2kh\Sigma)(k^2h^2\Sigma +(1-kh)\mathbf{I}_d)^{-1}z^k\\
    &- \frac{2-kh}{2}(k^2h^2\Sigma +(1-kh)\mathbf{I}_d)^{-1}\sum_{j=1}^{J}z_0^jw_j(z^k,(1-kh))\Bigg]h,\\
\end{aligned}   
\end{equation}
where $K = 1/h,\ k = 0, 1, \ldots, K-1$ and the initial state $z^0$ is sampled from the standard normal distribution $\mathcal{N}(0, \mathbf{I}_d)$. 
\begin{remark}
    We emphasize that the error splitting approach in Eq.~\eqref{eq:error_split} and the Gaussian mixture assumption in Eq.~\eqref{eq:gmm} ensures that the exact score function of the diffusion model can be analytically derived, i.e., Eq.~\eqref{eq:score_1}, such that the exact score can be used in solving the reverse ODE to avoid the accumulation of the score estimation error. In fact, by comparing the Monte Carlo estimator in Eq.~\eqref{eq:score_mc} and the exact score in Eq.~\eqref{eq:score_1}, we can quickly derive that Eq.~\eqref{eq:score_1} converges to Eq.~\eqref{eq:score_mc} as $\Sigma \rightarrow 0$. In other words, Eq.~\eqref{eq:score_mc} can also be viewed as the exact score when the target distribution is the empirical distribution defined by $\mathcal{D}_{\rm train}$ without any smoothing. This answers the first question presented at the beginning of Section \ref{sec:analysis}. On the other hand, Eq.~\eqref{eq:ode3.1.1} answers the first question regarding the singularity of the diffusion model presented in Section \ref{sec:train_diffusion}. We will answer the third questions in the next subsection.
\end{remark}

 \subsection{Error estimates for the discretized reverse ODE}\label{sec:error_bounds}
% The accuracy of the supervised generator hinges on the quality of the labeled data  produced by the reverse probability–flow ODE. In this subsection, we derive global 
% error bounds for the explicit Euler discretization of that ODE and make explicit 
% how the error depends on the step size, the dimension, and the Gaussian–mixture 
% parameters. Our analysis builds on the mixture surrogate introduced in 
% Assumption~3.1, which regularizes the empirical distribution and ensures that the 
% associated score field remains smooth throughout the reverse dynamics.

We derive global error bounds for the reverse ODE discretization error defined in Eq.~\eqref{eq:error_split}. Our analysis builds on the mixture surrogate introduced in Assumption~\ref{assum1}, which smooths the empirical distribution and ensures a well-behaved score function throughout the reverse diffusion process. To facilitate the analysis, we impose the following assumption on the boundedness of the training dataset $\mathcal{D}_{\rm train}$.
\begin{assum}\label{assum2}
The samples in the training dataset $\mathcal{D}_{\rm train}$ in Eq.~\eqref{eq:data} satisfy
\[
   \big\|z_0^j\big\|_2 \le M < \infty, \quad j=1,\dots,J,
\]
where $M$ is the radius of the smallest $\ell_2$ ball that can cover the entire training set. 
\end{assum}
%
% \begin{assum}[Diagonal covariance matrix]\label{assum3}
% %
% All mixture components share a common diagonal covariance matrix 
% \[
%    \Sigma = \operatorname{diag}(\sigma_1^2,\dots,\sigma_d^2).
% \]
% This reflects the standard inflation strategy used in ensemble methods and keeps the 
% notation light; the analysis extends directly to full covariance matrices with only 
% cosmetic changes.
% \end{assum}
%
% \begin{assum}[Moderate inflation scale]\label{assum4}
% Let $\lambda_{\max}$ denote the largest eigenvalue of $\Sigma$.  
% We assume $\lambda_{\max} < \frac{1}{2}$.
% This prevents the inflated covariance from becoming so large that it destabilizes the 
% linear drift near $t=1$. As discussed in Section~3.3, this condition affects only 
% constant factors and is not needed for the final convergence rates.
% \end{assum}
%
Under this assumption, the score function and drift field in Eq.~\eqref{eq:ode} admit uniform 
Lipschitz bounds in space and controlled variation in time, enabling 
dimension-explicit global error estimates for the explicit Euler scheme. Our error bounds are derived in two steps. The first step, presented in Section \ref{sec:path}, is to analyze the path-wise error for a single trajectory of the reverse ODE; the second step, presented in Section \ref{sec:ode_error}, is to analyze the error under expectation.

\subsubsection{The error bound for a single trajectory of the reverse ODE}\label{sec:path}
Before deriving the global error of the discretization scheme in Eq.~\eqref{eq:euler} for a given terminal value $z_1$, we first bound the entire exact path $t\mapsto z_t$ in terms of $z_1$. This pathwise envelope clarifies how the solution remains confined as it propagates backward from $t=1$ to $t=0$, and it prevents growth of constants that would otherwise obscure the accumulation of local discretization errors. With this bound in place, all subsequent estimates for the Euler iterates can be stated with constants that depend only on the ODE data and on $\|z_1\|_2$, ensuring a clean and stable comparison between the numerical trajectory and the exact one in the entire time domain $[0,1]$.
\begin{lem}\label{lem_zt_bound}
Under Assumption~\ref{assum1} and  \ref{assum2}, the solution $z_t$ of the reverse ODE in Eq.~\eqref{eq:ode3.1.1} satisfies 
\begin{equation}
    \|z_t\|_2 \le \frac{\max\left\{\sqrt{\lambda_{\max}}, 1\right\}}{\min\left\{\sqrt{\lambda_{\min}}, \frac14\right\}}\bigl(M+\|z_1\|_2\bigr),  \quad \forall \ 0\leq t\leq 1,
\end{equation}
where $M$ is the bound in Assumption~\ref{assum2}, $\lambda_{\min}$ and $\lambda_{\max}$ are the smallest and the largest eigenvalues of the covariance matrix $\Sigma$ of the Gaussian mixture model defined in Eq.~\eqref{eq:gmm}.
\end{lem}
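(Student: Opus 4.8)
The plan is to treat Eq.~\eqref{eq:ode3.1.1} as a linear (in $z$) ODE and solve it by variation of parameters, exploiting that every matrix appearing in the drift is a polynomial in $\Sigma$, hence all of them commute and are simultaneously diagonalizable. Write $A_t := (1-t)^2\Sigma + t\,\mathbf{I}_d$, so that $\dot A_t = \mathbf{I}_d - 2(1-t)\Sigma$ and $A_1 = \mathbf{I}_d$; a direct simplification of Eq.~\eqref{eq:ode3.1.1} shows the drift equals $\tfrac12\dot A_t A_t^{-1} z_t - \tfrac{1+t}{2}A_t^{-1}\mu_t$, where $\mu_t := \sum_{j=1}^J z_0^j\, w_j(z_t,t)$. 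Since the weights $w_j$ are nonnegative and sum to one, $\mu_t$ is a convex combination of the training points, so Assumption~\ref{assum2} gives $\|\mu_t\|_2 \le M$ for every $t$. This uniform bound is the only property of $\mu_t$ I would use; in particular the complicated nonlinear dependence of $\mu_t$ on $z_t$ never enters the estimate, so no Gr\"onwall/fixed-point coupling is needed.

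Next I would introduce the integrating factor $A_t^{-1/2}$. Using commutativity, $\tfrac{d}{dt}\big(A_t^{-1/2}z_t\big) = -\tfrac12 A_t^{-3/2}\dot A_t z_t + A_t^{-1/2}\dot z_t = -\tfrac{1+t}{2}A_t^{-3/2}\mu_t$, i.e.\ the term linear in $z_t$ cancels exactly. Integrating backward from $t=1$ and using $A_1=\mathbf{I}_d$ yields the representation $z_t = A_t^{1/2} z_1 + A_t^{1/2}\!\int_t^1 \tfrac{1+s}{2} A_s^{-3/2}\mu_s\,ds$, hence
\[
\|z_t\|_2 \;\le\; \|A_t^{1/2}\|_{\mathrm{op}}\,\|z_1\|_2 \;+\; M\,\|A_t^{1/2}\|_{\mathrm{op}}\int_t^1 \tfrac{1+s}{2}\,\|A_s^{-3/2}\|_{\mathrm{op}}\,ds .
\]
The crucial observation is the closed-form antiderivative $\tfrac{d}{ds}\big[-(1-s)\,a_s^{-1/2}\big] = \tfrac{1+s}{2}\,a_s^{-3/2}$ for the scalar $a_s := (1-s)^2\lambda + s$ (immediate on differentiating and simplifying). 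Because $\Sigma$ is positive definite and $a_s(\lambda)$ is increasing in $\lambda$, the operator norms above are attained at the extreme eigenvalues, $\|A_s^{-3/2}\|_{\mathrm{op}} = a_s(\lambda_{\min})^{-3/2}$ and $\|A_t^{1/2}\|_{\mathrm{op}} = a_t(\lambda_{\max})^{1/2}$, so the antiderivative gives $\int_t^1 \tfrac{1+s}{2}\|A_s^{-3/2}\|_{\mathrm{op}}\,ds = (1-t)\,a_t(\lambda_{\min})^{-1/2}$. This is the step that replaces what would otherwise be an exponential Gr\"onwall factor by a clean algebraic one.

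It then remains to estimate two elementary scalar quantities. First, $t\mapsto (1-t)^2\lambda + t$ is convex on $[0,1]$, hence maximized at an endpoint, giving $a_t(\lambda_{\max})^{1/2}\le \max\{\sqrt{\lambda_{\max}},1\}$. Second, I claim $(1-t)\,a_t(\lambda_{\min})^{-1/2}\le \max\{\lambda_{\min}^{-1/2},4\} = \big(\min\{\sqrt{\lambda_{\min}},\tfrac14\}\big)^{-1}$: squaring and setting $u=1-t\in[0,1]$, this is $u^2 \le \max\{\lambda_{\min}^{-1},16\}\,(u^2\lambda_{\min}+1-u)$, which I would check in the two cases $\lambda_{\min}\ge\tfrac1{16}$ (then $1-16\lambda_{\min}\le 0$, so $u^2(1-16\lambda_{\min})\le 0\le 16(1-u)$) and $\lambda_{\min}<\tfrac1{16}$ (then it reduces to $0\le \lambda_{\min}^{-1}(1-u)$). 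Plugging these two bounds into the displayed inequality, and using $\min\{\sqrt{\lambda_{\min}},\tfrac14\}\le 1$ to enlarge the coefficient of $\|z_1\|_2$, gives exactly $\|z_t\|_2 \le \tfrac{\max\{\sqrt{\lambda_{\max}},1\}}{\min\{\sqrt{\lambda_{\min}},1/4\}}\big(M+\|z_1\|_2\big)$.

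The main obstacle — and the reason the naive route fails — is that differentiating $\|z_t\|_2^2$ directly forces control of $\|\dot A_t A_t^{-1}\|_{\mathrm{op}}$, which blows up like $1/\lambda_{\min}$ as $t\to 0$ and produces a bound growing exponentially in $1/\lambda_{\min}$; the integrating factor $A_t^{-1/2}$ together with the exact antiderivative of $\tfrac{1+s}{2}A_s^{-3/2}$ is precisely what converts that would-be exponential into the factor $(1-t)\|A_t^{-1/2}\|_{\mathrm{op}}$. A secondary point is that one must use the exact antiderivative rather than a crude pointwise bound on $\|A_s^{-3/2}\|_{\mathrm{op}}$: the latter would yield a spurious $\lambda_{\min}^{-3/2}$, whereas the closed form gives the sharp $\lambda_{\min}^{-1/2}$ scaling recorded in the statement.
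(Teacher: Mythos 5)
Your proof is correct and is essentially the same as the paper's. The paper uses the variation-of-parameters formula with fundamental matrix $\Phi(t) = \bigl((1-t)^2\Sigma + t\mathbf{I}_d\bigr)^{1/2}$, which is exactly the inverse of your integrating factor $A_t^{-1/2}$; both lead to the same representation $z_t = A_t^{1/2}z_1 + A_t^{1/2}\int_t^1\tfrac{1+s}{2}A_s^{-3/2}\mu_s\,ds$, the same use of $\|\mu_s\|_2 \le M$, the same closed-form antiderivative $\int_t^1\tfrac{1+s}{2}a_s^{-3/2}\,ds = (1-t)a_t^{-1/2}$, and the same final eigenvalue bounds, with only a cosmetic difference in whether $(1-t)$ is bounded jointly with $a_t(\lambda_{\min})^{-1/2}$ or separately by $1-t\le 1$.
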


\begin{proof}
For notational simplicity, we rewrite the reverse ODE in Eq.~\eqref{eq:ode3.1.1} as the following form
\begin{equation}\label{ode3.3}
    dz_t = [A(t)z_t + g(z_t,t)]\,dt,
\end{equation}
where the function $A(t)$ is defined by
\begin{equation}\label{ode3.3_A}
    A(t) = \frac{1}{2}(\mathbf{I}_d-2(1-t)\Sigma)((1-t)^2\Sigma +t\mathbf{I}_d)^{-1},
\end{equation}
and the function $g(z_t,t)$ is defined by
\begin{equation}\label{ode3.3_g}
    g(z_t, t) = - \dfrac{1+t}{2}((1-t)^2\Sigma +t\mathbf{I}_d)^{-1}\sum_{j=1}^{J}z_0^jw_j(z_t,t).
\end{equation}
By the variation of parameters formula, the solution of reverse ODE in Eq.~\eqref{ode3.3} can be written as
\begin{equation}\label{ode3.3_zt}
\begin{aligned}
    z_t
    =&\exp\left(\int_1^t A(s)ds\right)z_1
    - \int_t^1\exp\left(\int_s^t A(u)du\right)g(z_s,s)\,ds .
\end{aligned}
\end{equation}
Using the explicit form of the fundamental matrix associated with $A(t)$ in Eq.~\eqref{ode3.3_A}, we have
\begin{equation}\label{ode3.3_zt_2}
\begin{aligned}
    z_t
    =&\bigl((1-t)^2\Sigma +t\mathbf{I}_d\bigr)^{\frac{1}{2}}z_1 \\
    & -\int_t^1\bigl((1-t)^2\Sigma +t\mathbf{I}_d\bigr)^{\frac{1}{2}}
    \bigl((1-s)^2\Sigma +s\mathbf{I}_d\bigr)^{-\frac{1}{2}}
    g(z_s,s)\,ds .
\end{aligned}
\end{equation}
Substituting the definition of $g(z_s,s)$ in Eq.~\eqref{ode3.3_g} into Eq.~\eqref{ode3.3_zt_2}, we have
\begin{equation}\label{ode3.3_zt_3}
\begin{aligned}
    z_t
    =&\bigl((1-t)^2\Sigma +t\mathbf{I}_d\bigr)^{\frac{1}{2}}z_1 \\
    &+\bigl((1-t)^2\Sigma +t\mathbf{I}_d\bigr)^{\frac{1}{2}}
    \int_t^1\frac{1+s}{2}\bigl((1-s)^2\Sigma +s\mathbf{I}_d\bigr)^{-\frac{3}{2}}
    \sum_{j=1}^{J}z_0^j\,w_j(z_s,s)\,ds .
\end{aligned}
\end{equation}
Taking the $\ell_2$ norm on both sides of Eq.~\eqref{ode3.3_zt_3} and using the triangle inequality and sub-multiplicativity of matrix norms, we obtain
\begin{equation}\label{ode3.3_|zt|_1}
\begin{aligned}
    \|z_t\|_2\le&
    \left\|\bigl((1-t)^2\Sigma +t\mathbf{I}_d\bigr)^{\frac{1}{2}}\right\|_2\,\|z_1\|_2 + \left\|\bigl((1-t)^2\Sigma +t\mathbf{I}_d\bigr)^{\frac{1}{2}}\right\|_2\\
    &\cdot\int_t^1\frac{1+s}{2}\left\|\bigl((1-s)^2\Sigma +s\mathbf{I}_d\bigr)^{-\frac{3}{2}}\right\|_2
    \sum_{j=1}^{J}\left\|z_0^j\right\|_2\,w_j(z_s,s)\,ds .
\end{aligned}
\end{equation}
Next we bound each term on the left of Eq.~\eqref{ode3.3_|zt|_1} uniformly in $d$. Since $\Sigma$ is positive definite with eigenvalues in $[\lambda_{\min},\lambda_{\max}]$ and $0 \le t \le 1$, it holds that
\begin{equation*}
    \left\|\bigl((1-t)^2\Sigma +t\mathbf{I}_d\bigr)^{\frac{1}{2}}\right\|_2
\le \bigl((1-t)^2\lambda_{\max}+t\bigr)^{\frac{1}{2}},
\end{equation*}
and
\begin{equation*}
    \left\|\bigl((1-s)^2\Sigma +s\mathbf{I}_d\bigr)^{-\frac{3}{2}}\right\|_2
\le \bigl((1-s)^2\lambda_{\min}+s\bigr)^{-\frac{3}{2}}.
\end{equation*}
Moreover, by Assumption~\ref{assum2}, we have
\begin{equation*}
    \sum_{j=1}^{J}\left\|z_0^j\right\|_2\,w_j(z_s,s)\le M\sum_{j=1}^J w_j(z_s,s)=M.
\end{equation*}
Substituting the above bounds into Eq.~\eqref{ode3.3_|zt|_1} gives
\begin{equation}\label{ode3.3_|zt|_2}
\begin{aligned}
    \|z_t\|_2 \le&
    \bigl((1-t)^2\lambda_{\max}+t\bigr)^{\frac{1}{2}}\|z_1\|_2 \\
    &+\bigl((1-t)^2\lambda_{\max}+t\bigr)^{\frac{1}{2}}
    \int_t^1\frac{1+s}{2}\bigl((1-s)^2\lambda_{\min}+s\bigr)^{-\frac{3}{2}}\, M\,ds .
\end{aligned}
\end{equation}
Finally, the integral term in Eq.~\eqref{ode3.3_|zt|_2} can be evaluated explicitly, leading to
\begin{equation}\label{ode3.3_|zt|_3}
\begin{aligned}
    \|z_t\|_2\le&
    \bigl((1-t)^2\lambda_{\max}+t\bigr)^{\frac{1}{2}}\|z_1\|_2\\
    &+\bigl((1-t)^2\lambda_{\max}+t\bigr)^{\frac{1}{2}}
    \bigl((1-t)^2\lambda_{\min}+t\bigr)^{-\frac{1}{2}}(1-t)\,M .
\end{aligned}
\end{equation}
\vspace{0.3cm}
Since $\Sigma$ is positive definite and $0\le t\le 1$, we further have 
\begin{equation}\label{lem_zt_bound_aux1}
\begin{aligned}
\max_{0\le t\le 1}\Bigl((1-t)^2\lambda_{\max}+t\Bigr)
=&
\begin{cases}
\lambda_{\max}, & \lambda_{\max}\ge 1,\\[10pt]
1, & 0<\lambda_{\max}< 1,
\end{cases}\\[10pt]
\le&\max\{\lambda_{\max},\,1\}.
\end{aligned}
\end{equation}
and we have
\begin{equation}\label{lem_zt_bound_aux2}
\begin{aligned}
\max_{0\le t\le 1}\Bigl((1-t)^2\lambda_{\min}+t\Bigr)^{-1}
=&
\begin{cases}
\dfrac{1}{\lambda_{\min}}, & 0<\lambda_{\min}\le \dfrac12,\\[10pt]
\dfrac{4\lambda_{\min}}{4\lambda_{\min}-1}, & \lambda_{\min}>\dfrac12,
\end{cases}\\[10pt]
\le& \max\left\{\dfrac{1}{\lambda_{\min}},\,2\right\}
= \dfrac{1}{\min\{\lambda_{\min},\,\tfrac12\}}.
\end{aligned}
\end{equation}
Using the bounds in Eq.~\eqref{lem_zt_bound_aux1} and Eq.~\eqref{lem_zt_bound_aux2}, we immediately obtain
\begin{equation}\label{lem_zt_bound_conseq}
\begin{aligned}
&\bigl((1-t)^2\lambda_{\max}+t\bigr)^{\frac12}
\le \max\left\{\sqrt{\lambda_{\max}},\,1\right\},\\
&\bigl((1-t)^2\lambda_{\min}+t\bigr)^{-\frac12}
\le \frac{1}{\min\{\sqrt{\lambda_{\min}},\,\tfrac14\}}.
\end{aligned}
\end{equation}
Substituting these bounds into Eq.~\eqref{ode3.3_|zt|_3} and using $1-t\le 1$, we obtain
\begin{equation}\label{lem_zt_bound_final}
\begin{aligned}
\|z_t\|_2
\le&\max\{\sqrt{\lambda_{\max}},\,1\}\,\|z_1\|_2
+\frac{\max\{\sqrt{\lambda_{\max}}, 1\}}{\min\{\sqrt{\lambda_{\min}}, \frac14\}}\,M\\
\le&\frac{\max\{\sqrt{\lambda_{\max}}, 1\}}{\min\{\sqrt{\lambda_{\min}}, \frac14\}}\bigl(\|z_1\|_2+M\bigr),
\end{aligned}
\end{equation}
which completes the proof.
\end{proof}

Next, we derive a uniform-in-time Lipschitz constant in $z_t$ for the right-hand side of the reverse ODE in Eq. \eqref{eq:ode3.1.1}, and the bound of its second derivatives with respect to $t$.
For this purpose we present two lemmas that provide derivative estimates for the weights $w_j(z_t,t)$ in Eq.~\eqref{eq:wi}.

% The classical error estimation is given as follows:

% \begin{lem}[Global truncation error of Euler method]\label{euler}

% Consider the initial value problem
% \[
% y'(t) = f(t,y(t)), \quad y(t_0) = y_0,
% \]
% and assume that $f(t,y)$ is Lipschitz continuous in $y$ with Lipschitz constant $L$, and that the exact solution $y(t)$ satisfies
% \[
% \|y''(t)\| \leq M \quad \text{for } t \in [t_0, T].
% \]

% The global error at time $t_n$ is bounded by
% \[
% \max_{0 \leq n \leq N} \| y(t_n) - y_n \| \leq \frac{e^{L(T-t_0)} - 1}{2L} M\, h.
% \]
% \end{lem}

\begin{lem}\label{lem_dwidzt}
Under Assumption \ref{assum1} and \ref{assum2}, for each $j = 1, \ldots, J$, the gradient of $w_j(z_t, t)$ in Eq.~\eqref{eq:wi} with respect to $z_t$ satisfies
\begin{equation}
    \begin{aligned}
        &\left\|\nabla_{z_t}w_j(z_t,t)\right\|_2\le\frac{2M}{\min\{\lambda_{\min},\frac{1}{2}\}}w_j(z_t,t),
    \end{aligned}
\end{equation}
where $M$ is the bound in Assumption~\ref{assum2} and $\lambda_{\min}$ denotes the minimal eigenvalue of the covariance matrix $\Sigma$ of the Gaussian mixture model in Eq.~\eqref{eq:gmm}.

\end{lem}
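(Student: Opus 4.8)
The plan is to exploit the softmax structure of the weights $w_j(z_t,t)$ in Eq.~\eqref{eq:wi}. I would first introduce the shorthand $\Lambda_t := \big((1-t)^2\Sigma + t\mathbf{I}_d\big)^{-1}$, which is symmetric positive definite for every $t\in[0,1]$ because $\Sigma$ is positive definite. Differentiating the exponent in the definition of $e_j$ in Eq.~\eqref{eq:ej} gives the clean identity
\[
  \nabla_{z_t}\log e_j(z_t,t) = -\,\Lambda_t\big(z_t-(1-t)z_0^j\big).
\]

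Next I would apply the standard softmax differentiation rule: since $w_j = e_j/\sum_{j'} e_{j'}$,
\[
  \nabla_{z_t} w_j = w_j\Big(\nabla_{z_t}\log e_j - \sum_{j'} w_{j'}\,\nabla_{z_t}\log e_{j'}\Big).
\]
Substituting the expression for $\nabla_{z_t}\log e_{j'}$ and using $\sum_{j'} w_{j'}=1$, the terms proportional to $\Lambda_t z_t$ cancel exactly, leaving the compact form
\[
  \nabla_{z_t} w_j(z_t,t) = (1-t)\,w_j(z_t,t)\,\Lambda_t\Big(z_0^j - \sum_{j'} w_{j'}(z_t,t)\,z_0^{j'}\Big).
\]
This cancellation is the one mildly delicate point; it is precisely what makes the final bound independent of $z_t$. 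Everything after this is routine estimation.

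Finally I would take $\ell_2$ norms and bound the three factors separately. The spectral norm $\|\Lambda_t\|_2$ is at most $\big((1-t)^2\lambda_{\min}+t\big)^{-1}$, since the eigenvalues of $(1-t)^2\Sigma+t\mathbf{I}_d$ lie in $[(1-t)^2\lambda_{\min}+t,\,(1-t)^2\lambda_{\max}+t]$. The convex combination $\sum_{j'} w_{j'} z_0^{j'}$ has $\ell_2$ norm at most $M$ by Assumption~\ref{assum2}, so $\|z_0^j - \sum_{j'} w_{j'} z_0^{j'}\|_2 \le 2M$. Combining these gives $\|\nabla_{z_t}w_j\|_2 \le 2M\,(1-t)\big((1-t)^2\lambda_{\min}+t\big)^{-1} w_j(z_t,t)$, and since $1-t\le 1$ together with the uniform-in-$t$ bound $\big((1-t)^2\lambda_{\min}+t\big)^{-1}\le 1/\min\{\lambda_{\min},\tfrac12\}$ already established in the proof of Lemma~\ref{lem_zt_bound} (cf. Eq.~\eqref{lem_zt_bound_aux2}), the claimed inequality follows. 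I do not anticipate any real obstacle here beyond keeping the softmax-gradient algebra tidy.
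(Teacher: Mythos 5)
Your proposal is correct and follows essentially the same route as the paper: both differentiate the softmax weight via the quotient/log-softmax rule, use $\sum_{j'} w_{j'}=1$ to cancel the $z_t$-dependent terms (the paper writes the resulting expression as $(1-t)w_j\,\Lambda_t\sum_{j'}(z_0^j-z_0^{j'})w_{j'}$, which is identical to your $(1-t)w_j\,\Lambda_t(z_0^j-\sum_{j'}w_{j'}z_0^{j'})$), bound the data factor by $2M$, and bound $(1-t)\|\Lambda_t\|_2$ via the same spectral estimate Eq.~\eqref{lem_zt_bound_aux2}. The only cosmetic difference is that you use convexity of the norm for the $2M$ bound while the paper applies the triangle inequality term by term.
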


\begin{proof}
From the definition of $w_j(z_t,t)$ given in Eq.~\eqref{eq:wi}, we have
\begin{equation*}
    \begin{aligned}
        &\nabla_{z_t}w_j(z_t,t)\\
        =&\;\;\nabla_{z_t}\frac{\exp (-\frac{1}{2}(z_t-(1-t)z_0^j)^T((1-t)^2\Sigma +t\mathbf{I}_d)^{-1}(z_t-(1-t)z_0^j))}{\sum_{j'=1}^{J}\exp (-\frac{1}{2}(z_t-(1-t)z_0^{j'})^T((1-t)^2\Sigma +t\mathbf{I}_d)^{-1}(z_t-(1-t)z_0^{j'}))}\\
        =&\;\;\nabla_{z_t}\frac{e_j(z_t,t)}{\sum_{j'=1}^{J}e_{j'}(z_t,t)}\\
        =&\;\;\frac{((1-t)^2\Sigma +t\mathbf{I}_d)^{-1}((1-t)z_0^j-z_t)e_j(z_t,t)}{\sum_{j'=1}^{J}e_{j'}(z_t,t)}\\
        &-\;\;\frac{e_j(z_t,t)\left(\sum_{j'=1}^{J}((1-t)^2\Sigma +t\mathbf{I}_d)^{-1}((1-t)z_0^{j'}-z_t)e_{j'}(z_t,t)\right)}{\left(\sum_{j'=1}^{J}e_{j'}(z_t,t)\right)^2}\\
        =&\;\;((1-t)^2\Sigma +t\mathbf{I}_d)^{-1}w_j(z_t,t)\left[((1-t)z_0^j-z_t) -  \sum_{j'=1}^{J}((1-t)z_0^{j'}-z_t)w_{j'}(z_t,t)\right].\\
    \end{aligned}
\end{equation*}
Noting that $\sum_{j'=1}^{J} w_{j'}(z_t,t) = 1$, we can simplify the above equation as
\begin{equation*}
    \begin{aligned}
        \nabla_{z_t}w_j(z_t,t)
        =&\,((1-t)^2\Sigma +t\mathbf{I}_d)^{-1}w_j(z_t,t)\\
        &\cdot\left[\sum_{j'=1}^{J}((1-t)z_0^j-z_t)w_{j'}(z_t,t) - \sum_{j'=1}^{J}((1-t)z_0^{j'}-z_t)w_{j'}(z_t,t)\right]\\
        =&\, (1-t)((1-t)^2\Sigma +t\mathbf{I}_d)^{-1}w_j(z_t,t)\sum_{j'=1}^{J}(z_0^j-z_0^{j'})w_{j'}(z_t,t).
    \end{aligned}
\end{equation*}
% {\color{red}
% \begin{equation}
%     \begin{aligned}
%         &\frac{d \sum_{i=1}^{N}z_0^iw_i(z_t,t)}{d z_t}\\
%         =& (1-t)((1-t)^2\Sigma +t\mathbf{I}_d)^{-1}\sum_{i=1}^{N}\left(z_0^iw_i(z_t,t)\sum_{j=1}^{N}(z_0^i-z_0^j)^Tw_j(z_t,t)\right)\\
%         =& (1-t)((1-t)^2\Sigma +t\mathbf{I}_d)^{-1}\sum_{i=1}^{N}\sum_{j=1}^{N}(z_0^i-z_0^j)(z_0^i-z_0^j)^Tw_i(z_t,t)w_j(z_t,t)\\
%     \end{aligned}
% \end{equation}}
Taking the norm on both sides yields the estimate
\begin{equation}\label{grad_wi}
    \begin{aligned}
        &\left\|\nabla_{z_t}w_j(z_t,t)\right\|_2
        \le\left\|(1-t)((1-t)^2\Sigma +t\mathbf{I}_d)^{-1} \right\|_2w_j(z_t,t)\sum_{j'=1}^{J}\left\|z_0^j-z_0^{j'}\right\|_2w_{j'}(z_t,t).\\
    \end{aligned}
\end{equation}
% Since $\Sigma$ is positive definite and $0\le t \le 1$, we have
% \begin{equation}\label{lem_dwidzt_eq1}
% \begin{aligned}
% &\max_{0\le t \le 1}\|(1-t)((1-t)^2\Sigma +t\mathbf{I}_d)^{-1}\|_2\\
% =&\max_{0\le t \le 1}\frac{1-t}{(1-t)^2\lambda_{\min}+t}\\
% =&
% \begin{cases}
% \dfrac{1}{\lambda_{\min}}, & 0<\lambda_{\min}\le 1,\\
% \dfrac{1}{2\sqrt{\lambda_{\min}}-1}, & \lambda_{\min}>1,
% \end{cases}\\
% \le& \max\left\{\dfrac{1}{\lambda_{\min}},\,1\right\}
% =\dfrac{1}{\min\{\lambda_{\min},\,1\}}.
% \end{aligned}
% \end{equation}
Since $\Sigma$ is positive definite and $0\le t\le 1$, we have 
\begin{equation}\label{lem_dwidzt_eq1}
\begin{aligned}
&\max_{0\le t\le 1}\Bigl\|\bigl(1-t)((1-t)^2\Sigma+t\mathbf{I}_d\bigr)^{-1}\Bigr\|_2
=\max_{0\le t\le 1}\frac{1-t}{(1-t)^2\lambda_{\min}+t}
\le\frac{1}{\min\{\lambda_{\min},\,\tfrac12\}}, 
\end{aligned}
\end{equation}
where the last inequality follows from Eq.~\eqref{lem_zt_bound_aux2}.
Furthermore, we have $\|z_0^j\|_2< M$ for all $j$ in Assumption \eqref{assum2}. Hence, for all $j,j'$, $\|z_0^j - z_0^{j'}\|_2 \le 2M$. Substituting this into Eq.~\eqref{grad_wi}, and applying Eq.~\eqref{lem_dwidzt_eq1}, we deduce
\begin{equation}\label{lem_dwidzt_eq3}
    \begin{aligned}
        \left\|\nabla_{z_t}w_j(z_t,t)\right\|_2 \le& \frac{2M}{\min\{\lambda_{\min}, \frac{1}{2}\}}w_j(z_t,t)\sum_{j=1}^{J}w_j(z_t,t)
        =\frac{2M}{\min\{\lambda_{\min},\frac{1}{2}\}}w_j(z_t,t).
    \end{aligned}
\end{equation}
This completes the proof.
\end{proof}

\begin{lem}\label{lem_wi_t}
Under Assumptions~\ref{assum1} and \ref{assum2}, for each $j = 1, \ldots, J$, the time derivative of $w_j(z_t,t)$ in Eq.~\eqref{eq:wi} satisfies
\begin{equation*}
    \begin{aligned}
        &\left|\frac{\partial w_j(z_t,t)}{\partial t}\right|\le\dfrac{2+2\lambda_{\max}}{\min\{\lambda^2_{\min},\frac{1}{4}\}}(\|z_t\|_2 + M)^2w_j(z_t,t),
    \end{aligned}
\end{equation*}
where $M$ is the data bound in Assumption~\ref{assum2}, $\lambda_{\min}$ and $\lambda_{\max}$ are resepectively the smallest and largest eigenvalues of the covariance matrix $\Sigma$ from the Gaussian mixture model in Eq.~\eqref{eq:gmm}.
\end{lem}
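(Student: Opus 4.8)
The plan is to express $w_j$ as a softmax of the Gaussian exponents and differentiate in $t$ with the spatial slot held fixed. Write $M(t):=(1-t)^2\Sigma+t\mathbf{I}_d$, $u_j(t):=z_t-(1-t)z_0^j$ (treating $z_t$ as a frozen point in $\mathbb{R}^d$, since this lemma supplies the \emph{explicit} partial $t$-derivative that will later be combined with the spatial gradient of Lemma~\ref{lem_dwidzt} through the chain rule), and $q_j(t):=-\tfrac12\,u_j(t)^\top M(t)^{-1}u_j(t)=\log e_j(z_t,t)$. Then $w_j=e^{q_j}/\sum_{j'}e^{q_{j'}}$, and the elementary softmax identity gives
\[
  \frac{\partial w_j}{\partial t}
  = w_j\Big(q_j'(t)-\sum_{j'=1}^{J}w_{j'}\,q_{j'}'(t)\Big),
\]
so it is enough to bound $\max_j|q_j'(t)|$ uniformly in $t$, after which $\big|\partial_t w_j\big|\le w_j\big(|q_j'|+\sum_{j'}w_{j'}|q_{j'}'|\big)\le 2\,w_j\,\max_j|q_j'(t)|$.

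Next I would differentiate the exponent. Using $u_j'(t)=z_0^j$, $M'(t)=\mathbf{I}_d-2(1-t)\Sigma$, the identity $(M^{-1})'=-M^{-1}M'M^{-1}$, and the symmetry of $M^{-1}$, one obtains
\[
  q_j'(t)= -\,(z_0^j)^\top M(t)^{-1}u_j(t)+\tfrac12\,u_j(t)^\top M(t)^{-1}M'(t)M(t)^{-1}u_j(t),
\]
and hence, by sub-multiplicativity of the spectral norm,
\[
  |q_j'(t)|\le \|z_0^j\|_2\,\|M(t)^{-1}\|_2\,\|u_j(t)\|_2+\tfrac12\,\|M(t)^{-1}\|_2^2\,\|M'(t)\|_2\,\|u_j(t)\|_2^2 .
\]

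It then remains to estimate the four factors, each uniformly in $d$ and $t\in[0,1]$. Assumption~\ref{assum2} gives $\|z_0^j\|_2\le M$ and $\|u_j(t)\|_2\le\|z_t\|_2+(1-t)\|z_0^j\|_2\le\|z_t\|_2+M$. Since $\Sigma$ is positive definite with spectrum in $[\lambda_{\min},\lambda_{\max}]$, the eigenvalues of $M(t)$ lie in $[(1-t)^2\lambda_{\min}+t,\,(1-t)^2\lambda_{\max}+t]$, so $\|M(t)^{-1}\|_2\le\big((1-t)^2\lambda_{\min}+t\big)^{-1}\le 1/\min\{\lambda_{\min},\tfrac12\}$ by Eq.~\eqref{lem_zt_bound_aux2}, while the eigenvalues $1-2(1-t)\lambda_i$ of $M'(t)$ satisfy $|1-2(1-t)\lambda_i|\le 1+2\lambda_{\max}$, giving $\|M'(t)\|_2\le 1+2\lambda_{\max}$. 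Substituting these bounds, replacing the lone factor $M$ by $\|z_t\|_2+M$, and using the elementary inequality $1/\min\{\lambda_{\min},\tfrac12\}\le 1/\big(2\min\{\lambda_{\min},\tfrac12\}^2\big)=1/\big(2\min\{\lambda_{\min}^2,\tfrac14\}\big)$ (valid because $\min\{\lambda_{\min},\tfrac12\}\le\tfrac12$), the two terms collapse into
\[
  |q_j'(t)|\le\frac{1+\lambda_{\max}}{\min\{\lambda_{\min}^2,\tfrac14\}}\,(\|z_t\|_2+M)^2 ,
\]
and the factor-of-two softmax estimate from the first paragraph yields the claimed bound.

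The matrix-calculus step is routine; the only delicate point is the constant bookkeeping that merges $1/\min\{\lambda_{\min},\tfrac12\}$ and $1/\min\{\lambda_{\min},\tfrac12\}^2$ into the single denominator $\min\{\lambda_{\min}^2,\tfrac14\}$ and reproduces the exact prefactor $2+2\lambda_{\max}$, together with the observation that one should center by $\sum_{j'}w_{j'}q_{j'}'$ (via the softmax identity) rather than bound $q_j'$ crudely, so that the estimate stays proportional to $w_j$ as stated.
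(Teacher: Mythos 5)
Your proof is correct and follows essentially the same route as the paper's: you both differentiate the Gaussian exponent (arriving at the identical expression $q_j'(t)= -(z_0^j)^\top M^{-1}u_j+\tfrac12 u_j^\top M^{-1}M'M^{-1}u_j$), bound the factors using $\|M^{-1}\|_2\le 1/\min\{\lambda_{\min},\tfrac12\}$, $\|M'\|_2\le 1+2\lambda_{\max}$, $\|u_j\|_2\le\|z_t\|_2+M$, and then pay a factor of two to go from the per-component bound to $w_j$. The only cosmetic difference is that you package the quotient-rule step as the softmax identity $\partial_t w_j = w_j\big(q_j'-\sum_{j'}w_{j'}q_{j'}'\big)$ on the log-exponents, whereas the paper differentiates $e_j$ itself and applies the quotient rule directly; the two are algebraically identical and produce the same constant.
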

\begin{proof}

We begin by estimating the time derivative of $e_j(z_t, t)$ in Eq.~\eqref{eq:ej}:
\begin{equation*}
    \begin{aligned}
        \left|\frac{\partial e_j(z_t, t)}{\partial t}\right|
        =& \left| \frac{\partial}{\partial t} \exp\left(-\frac{1}{2}(z_t-(1-t)z_0^j)^T ((1-t)^2\Sigma + t\mathbf{I}_d)^{-1}(z_t-(1-t)z_0^j) \right) \right| \\
        =&\; e_j(z_t, t) \Big| -(z_t - (1-t)z_0^j)^T ((1-t)^2\Sigma + t\mathbf{I}_d)^{-1} z_0^j \\
        &+ \frac{1}{2}(z_t - (1-t)z_0^j)^T ((1-t)^2\Sigma + t\mathbf{I}_d)^{-1} (\mathbf{I}_d - 2(1-t)\Sigma) \\
        &\quad \cdot ((1-t)^2\Sigma + t\mathbf{I}_d)^{-1} (z_t - (1-t)z_0^j) \Big|.
    \end{aligned}
\end{equation*}
By bounding each term using $\|((1-t)^2\Sigma + t\mathbf{I}_d)^{-1}\|_2$ $ \le \dfrac{1}{\min\{\lambda_{\min},\frac{1}{2}\}}$ in Eq.~\eqref{lem_zt_bound_aux2} and $\|z_0^j\|_2 \le M$ in Assumption \ref{assum2}, we obtain
\begin{equation*}
    \begin{aligned}
        \left|\frac{\partial e_j(z_t, t)}{\partial t}\right|
        \le& e_j(z_t, t) \left( \frac{(\|z_t\|_2 + M) M}{\min\{\lambda_{\min},\frac{1}{2}\}} + \frac{(\|z_t\|_2 + M)^2(1+2\lambda_{\max})}{2\min\{\lambda^2_{\min},\frac{1}{4}\}} \right)\\
        \le&e_j(z_t, t)\left(\frac{(\|z_t\|_2 + M)^2}{2\min\{\lambda^2_{\min},\frac{1}{4}\}} + \frac{(\|z_t\|_2 + M)^2(1+2\lambda_{\max})}{2\min\{\lambda^2_{\min},\frac{1}{4}\}}\right)\\
        =&e_j(z_t, t)\frac{1+\lambda_{\max}}{\min\{\lambda^2_{\min},\frac{1}{4}\}}(\|z_t\|_2 + M)^2.
    \end{aligned}
\end{equation*}
For  the derivative of $w_j(z_t, t)$ in Eq.~\eqref{eq:wi}, we have 
\begin{equation}\label{dwidt}
    \begin{aligned}
        \left|\frac{\partial w_j(z_t,t)}{\partial t}\right|
        =&\left|\frac{\partial}{\partial t}\frac{e_j(z_t, t)}{\sum_{j'=1}^J e_{j'}(z_t, t)}\right|\\
        \le&\left|\dfrac{\dfrac{\partial e_j(z_t, t)}{\partial t}}{\sum_{j'=1}^J e_{j'}(z_t, t)}\right| + \left|\frac{e_j(z_t, t)\sum_{j'=1}^J \dfrac{\partial e_{j'}(z_t, t)}{\partial t}}{(\sum_{j'=1}^J e_{j'}(z_t, t))^2}\right|\\
        \le&\dfrac{e_j(z_t, t)\dfrac{1+\lambda_{\max}}{\min\{\lambda^2_{\min},\frac{1}{4}\}}(\|z_t\|_2 + M)^2}{\sum_{j'=1}^J e_{j'}(z_t, t)}\\
        &+\dfrac{e_j(z_t, t)\sum_{j'=1}^J e_{j'}(z_t, t)\dfrac{1+\lambda_{\max}}{\min\{\lambda^2_{\min},\frac{1}{4}\}}(\|z_t\|_2 + M)^2}{(\sum_{j'=1}^J e_{j'}(z_t, t))^2}\\
        =&\dfrac{2+2\lambda_{\max}}{\min\{\lambda^2_{\min},\frac{1}{4}\}}(\|z_t\|_2 + M)^2w_j(z_t,t), 
    \end{aligned}
\end{equation}
which completes the proof.
\end{proof}

With Lemmas~\ref{lem_dwidzt} and~\ref{lem_wi_t}, we have established explicit upper bounds on the partial derivatives of the weight function $w_j(z_t,t)$ in Eq.~\eqref{eq:wi} with respect to both the spatial variable $z_t$ and the temporal variable $t$. Equipped with these tools, we are now ready to analyze the global error of the reverse ODE in Eq.~\eqref{eq:ode3.1.1}. 

\begin{thm}\label{thm_euler}
    Under Assumption \ref{assum1} and \ref{assum2}, Lemma \ref{lem_dwidzt} and \ref{lem_wi_t}, consider the probability flow ODE in Eq.~\eqref{eq:ode3.1.1} with initial condition $z_1$ at $t=1$. The global error between $z_0$ and numerical solution $z^K$ in Eq.~\eqref{eq:euler} is bounded by 
    \begin{equation}\label{thm_euler_eq1}
    \begin{aligned}
        \left\|z_0 - z^K \right\|_2 \le& C\exp\left(\frac{1+2\lambda_{\max}+8M^2}{4\min\{\lambda^2_{\min},\frac{1}{4}\}}\right)\\
        &\cdot\frac{(1+\lambda_{\max})^2(1+M)\Big(1 + \frac{\max\{\sqrt{\lambda_{\max}}, 1\}}{\min\{\sqrt{\lambda_{\min}}, \frac14\}}(M+\|z_1\|_2) + M\Big)^2}{(1+2\lambda_{\max}+8M^2)\min\{\lambda_{\min},\frac{1}{2}\}}h,
    \end{aligned}      
    \end{equation}
where $M$ is the bound in Assumption~\ref{assum2}, $\lambda_{\min}$ and $\lambda_{\max}$ are the smallest and largest eigenvalues of the covariance matrix $\Sigma$ from the Gaussian mixture model in Eq.~\eqref{eq:gmm}, $h$ is the time step in Eq.~\eqref{eq:euler}, and $C$  is a constant independent of $\lambda_{\max}, \lambda_{\min}$$, z_1, M, h$ and $d$.
\end{thm}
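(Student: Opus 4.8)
The plan is to view Eq.~\eqref{eq:ode3.1.1} as $\dot z_t = f(z_t,t)$ with $f(z,t):=A(t)z+g(z,t)$, where $A(t)$ and $g(z,t)$ are as in Eq.~\eqref{ode3.3_A}--\eqref{ode3.3_g}, and to reduce the theorem to the classical global-error estimate for explicit Euler applied to this reverse-time integration: if $f(\cdot,t)$ is Lipschitz in its first argument with a constant $L$ uniform over $t\in[0,1]$, and if the exact trajectory obeys $\sup_{0\le t\le 1}\|\ddot z_t\|_2\le M_2$, then the iterates of Eq.~\eqref{eq:euler} satisfy
\begin{equation*}
\|z_0-z^K\|_2\;\le\;\frac{M_2\,h}{2L}\bigl(e^{L}-1\bigr)\;\le\;\frac{M_2}{2L}\,e^{L}\,h,
\end{equation*}
via the one-step local truncation bound $\tfrac12 M_2 h^2$ and the discrete Gr\"onwall inequality $\|e_{k+1}\|_2\le(1+Lh)\|e_k\|_2+\tfrac12 M_2 h^2$ together with $Kh=1$. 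Thus everything comes down to (i) a uniform spatial Lipschitz constant $L$ for $f$ and (ii) a uniform bound $M_2$ on $\|\ddot z_t\|_2$ along the exact flow.

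For step (i), I would bound $\|A(t)\|_2$ by diagonalizing $\Sigma$: every eigenvalue of $A(t)$ has modulus at most $\tfrac12(1+2\lambda_{\max})/\min\{\lambda_{\min},\tfrac12\}$, using the denominator bound $\min_t\bigl((1-t)^2\lambda_{\min}+t\bigr)\ge\min\{\lambda_{\min},\tfrac12\}$ implicit in Eq.~\eqref{lem_zt_bound_aux2}. For $g(z,t)=-\tfrac{1+t}{2}\bigl((1-t)^2\Sigma+t\mathbf I_d\bigr)^{-1}\sum_j z_0^j w_j(z,t)$, differentiating in $z$ and invoking Lemma~\ref{lem_dwidzt} together with $\|z_0^j\|_2\le M$ and $\sum_j w_j=1$ gives a Lipschitz constant of order $M^2/\min\{\lambda^2_{\min},\tfrac14\}$ that is independent of $z$, hence uniform in $t$ and $z$. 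Adding the two contributions and using $\min\{\lambda_{\min},\tfrac12\}\ge\min\{\lambda^2_{\min},\tfrac14\}$ yields a value $L\le\tfrac{1+2\lambda_{\max}+8M^2}{4\min\{\lambda^2_{\min},1/4\}}$, which is exactly the exponent appearing in Eq.~\eqref{thm_euler_eq1}.

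The heart of the argument, and the step I expect to be the main obstacle, is step (ii): along the exact flow $\ddot z_t=\partial_t f(z_t,t)+\nabla_z f(z_t,t)\,\dot z_t$, and $\partial_t f$ requires differentiating in time both the matrix functions $A(t)$ and $\bigl((1-t)^2\Sigma+t\mathbf I_d\bigr)^{-1}$ and the softmax weights $w_j(z_t,t)$. I would (a) bound $\|\dot z_t\|_2=\|f(z_t,t)\|_2$ from $\|A(t)\|_2$, $\|g(z_t,t)\|_2$ and the pathwise envelope of Lemma~\ref{lem_zt_bound}; (b) bound $\|\partial_t f(z_t,t)\|_2$ by combining explicit derivative bounds for $A(t)$ and for the matrix inverse with Lemma~\ref{lem_wi_t}, and again using Lemma~\ref{lem_zt_bound} to replace every $\|z_t\|_2$ by $\tfrac{\max\{\sqrt{\lambda_{\max}},1\}}{\min\{\sqrt{\lambda_{\min}},1/4\}}(M+\|z_1\|_2)$; and (c) combine (a)--(b) with $\|\nabla_z f\|_2\le L$ from step (i). Collecting powers of $\lambda_{\max}$, $\lambda_{\min}^{-1}$, $M$ and $\|z_1\|_2$ produces an $M_2$ of the form $(1+\lambda_{\max})^2(1+M)\bigl(1+\tfrac{\max\{\sqrt{\lambda_{\max}},1\}}{\min\{\sqrt{\lambda_{\min}},1/4\}}(M+\|z_1\|_2)+M\bigr)^2$ times a power of $1/\min\{\lambda_{\min},\tfrac12\}$; the careful bookkeeping of these constants, while mechanical, is the delicate part.

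Finally, I would assemble the pieces by substituting the estimates for $L$ and $M_2$ into $\|z_0-z^K\|_2\le\tfrac{M_2}{2L}e^{L}h$, noting that $\tfrac1L=\tfrac{4\min\{\lambda^2_{\min},1/4\}}{1+2\lambda_{\max}+8M^2}$ cancels (using $\min\{\lambda^2_{\min},\tfrac14\}=\min\{\lambda_{\min},\tfrac12\}^2$) against two of the $\min\{\lambda_{\min},\tfrac12\}^{-1}$ factors carried by $M_2$, leaving a single such factor and the factor $1/(1+2\lambda_{\max}+8M^2)$ in the denominator, and absorbing all remaining numerical constants into the parameter- and dimension-independent constant $C$. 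The point worth emphasizing in the write-up is that $L$ and $M_2$ are uniform over $t\in[0,1]$ and $z\in\mathbb R^d$ and depend on $d$ only through the eigenvalue range $[\lambda_{\min},\lambda_{\max}]$ of $\Sigma$, so no hidden dimensional factor enters and the bound is numerically verifiable in $d$.
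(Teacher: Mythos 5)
Your proposal follows essentially the same route as the paper's proof: rewrite Eq.~\eqref{eq:ode3.1.1} as $\dot z_t=f(z_t,t)$, use Lemma~\ref{lem_dwidzt} to obtain the uniform spatial Lipschitz constant $L=\tfrac{1+2\lambda_{\max}+8M^2}{4\min\{\lambda^2_{\min},1/4\}}$, bound $\|\ddot z_t\|_2$ via $\partial_t f+\nabla_z f\,\dot z_t$ with Lemma~\ref{lem_wi_t} and Lemma~\ref{lem_zt_bound}, and then plug $L$ and $M_2$ into the classical explicit-Euler global-error bound $\|z_0-z^K\|_2\le\tfrac{e^L-1}{2L}M_2h$, observing the cancellation between $1/L$ and the $\min\{\lambda_{\min},\tfrac12\}$ powers. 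This matches the paper's argument step for step, so the proposal is correct and not a different route.
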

\begin{proof}
For notational simplicity, we define the following drift function
\begin{equation*}
\begin{aligned}
    f(z_t, t)& :=\frac{1}{2}(1-2(1-t)\Sigma)((1-t)^2\Sigma +t\mathbf{I}_d)^{-1}z_t\\
    & \quad\;\;- \frac{1+t}{2}((1-t)^2\Sigma +t\mathbf{I}_d)^{-1}\sum_{j=1}^{J}z_0^jw_j(z_t,t),
\end{aligned}
\end{equation*}
such that the reverse ODE in Eq.~\eqref{eq:ode3.1.1} can be rewritten as 
\begin{equation*}
    \frac{d z_t}{dt} = f(z_t, t).
\end{equation*}
We first estimate the Jacobian of \( f \) with respect to \( z_t \):
\begin{equation*}
\begin{aligned}
    \frac{\partial f(z_t, t)}{\partial z_t}
    =&\frac{1}{2}(\mathbf{I}_d-2(1-t)\Sigma)((1-t)^2\Sigma +t\mathbf{I}_d)^{-1}\\
    &-\frac{1+t}{2}((1-t)^2\Sigma +t\mathbf{I}_d)^{-1}\sum_{j=1}^{J}z_0^j\frac{\partial w_j(z_t,t)}{\partial z_t}.
\end{aligned}
\end{equation*}
Taking the $\ell_2$ norm, we obtain
\begin{equation*}
\begin{aligned}
    \left\|\frac{\partial f(z_t, t)}{\partial z_t}\right\|_2
    \le&\frac{1}{2}\|(\mathbf{I}_d-2(1-t)\Sigma)\|_2\|((1-t)^2\Sigma +t\mathbf{I}_d)^{-1}\|_2\\
    &+\frac{1+t}{2}\left\|((1-t)^2\Sigma +t\mathbf{I}_d)^{-1}\right\|_2\left\|\sum_{j=1}^{J}z_0^j\frac{\partial w_j(z_t,t)}{\partial z_t}\right\|_2\\
    \le&\frac{1+2\lambda_{\max}}{2\min\{\lambda_{\min},\frac{1}{2}\}} + \frac{M}{\min\{\lambda_{\min},\frac{1}{2}\}}\sum_{j=1}^{J}\frac{2M}{\min\{\lambda_{\min},\frac{1}{2}\}}w_j(z_t,t),\\
\end{aligned}
\end{equation*}
where the last inequality follows Eq.~\eqref{lem_dwidzt_eq1} and Lemma \ref{lem_dwidzt}. Since $\sum_{j=1}^{J}$ $w_j(z_t,t) = 1$, we have
\begin{equation}\label{dfdzt}
\begin{aligned}
    \left\|\frac{\partial f(z_t, t)}{\partial z_t}\right\|_2
    \le&\frac{1+2\lambda_{\max}}{2\min\{\lambda_{\min},\frac{1}{2}\}} + \frac{2M^2}{\min\{\lambda^2_{\min},\frac{1}{4}\}}\\
    \le&\frac{1+2\lambda_{\max}}{4\min\{\lambda^2_{\min},\frac{1}{4}\}} + \frac{2M^2}{\min\{\lambda^2_{\min},\frac{1}{4}\}}
    = \frac{1+2\lambda_{\max}+8M^2}{4\min\{\lambda^2_{\min},\frac{1}{4}\}},
\end{aligned}
\end{equation}
which shows $f(z_t, t)$ is Lipschitz continuous in $z_t$ with Lipschitz constant $\frac{1+2\lambda_{\max}+8M^2}{4\min\{\lambda^2_{\min},\frac{1}{4}\}}$. Next, we estimate the second-order derivative:
\begin{equation}\label{zt''}
\begin{aligned}
    \left\|\frac{d^2 z_t}{dt^2}\right\|_2 
    &= \left\|\frac{\partial f(z_t, t)}{\partial z_t} \cdot \frac{d z_t}{dt} + \frac{\partial f(z_t, t)}{\partial t} \right\|_2 \\
    &\le \left\|\frac{\partial f(z_t, t)}{\partial z_t}\right\|_2 \cdot \left\|f(z_t, t)\right\|_2 + \left\|\frac{\partial f(z_t, t)}{\partial t}\right\|_2.
\end{aligned}
\end{equation}
We estimate the norm of $f(z_t, t)$ as follows:
\begin{equation}\label{f_norm}
\begin{aligned}
    \left\|f(z_t, t)\right\|_2 =& \Bigg\| \frac{1}{2}(\mathbf{I}_d - 2(1-t)\Sigma)((1-t)^2\Sigma + t\mathbf{I}_d)^{-1}z_t \\
    &- \frac{1+t}{2}((1-t)^2\Sigma + t\mathbf{I}_d)^{-1}\sum_{j=1}^{J}z_0^j w_j(z_t,t) \Bigg\|_2 \\
    \le& \frac{1}{2} \|\mathbf{I}_d - 2(1-t)\Sigma\|_2 \|((1-t)^2\Sigma + t\mathbf{I}_d)^{-1}\|_2 \|z_t\|_2 \\
    &+ \frac{1+t}{2} \left\|((1-t)^2\Sigma + t\mathbf{I}_d)^{-1}\right\|_2 \left\|\sum_{j=1}^{J}z_0^j w_j(z_t,t)\right\|_2 \\
    \le& \frac{(1+2\lambda_{\max})\|z_t\|_2}{2\min\{\lambda_{\min},\frac{1}{2}\}} + \frac{M}{\min\{\lambda_{\min},\frac{1}{2}\}}
    = \frac{(1+2\lambda_{\max})\|z_t\|_2+2M}{2\min\{\lambda_{\min},\frac{1}{2}\}}.
\end{aligned}
\end{equation}
Additionally, we can derive the partial derivative of $f(z_t, t)$ with respect to $t$:
%
% \begin{equation}\label{dfdt}
% \begin{aligned}
% \frac{\partial f(\bar z_t, t)}{\partial t}
% =&\Sigma A z_t\\
% &-\frac{1}{2}BABAz_t\\
% &-\frac{1}{2}A\sum_{i=1}^{N}z_0^i w_i(z_t,t)\\
% &+\frac{1+t}{2}ABA\sum_{i=1}^{N}z_0^i w_i(z_t,t)\\
% &-\frac{1+t}{2}A\sum_{i=1}^{N}z_0^i \frac{\partial w_i(z_t,t)}{\partial t}.
% \end{aligned}
% \end{equation}
\begin{equation}\label{thm_euler_dfdt}
\begin{aligned}
&\frac{\partial f(z_t, t)}{\partial t}\\
=&\Sigma ((1-t)^2\Sigma + t\mathbf{I}_d)^{-1} z_t\\
&-\frac{1}{2}(\mathbf{I}_d - 2(1-t)\Sigma)((1-t)^2\Sigma + t\mathbf{I}_d)^{-1}(\mathbf{I}_d - 2(1-t)\Sigma)((1-t)^2\Sigma + t\mathbf{I}_d)^{-1}z_t\\
&-\frac{1}{2}((1-t)^2\Sigma + t\mathbf{I}_d)^{-1}\sum_{j=1}^{J}z_0^j w_j(z_t,t)\\
&+\frac{1+t}{2}((1-t)^2\Sigma + t\mathbf{I}_d)^{-1}(\mathbf{I}_d - 2(1-t)\Sigma)((1-t)^2\Sigma + t\mathbf{I}_d)^{-1}\sum_{j=1}^{J}z_0^j w_j(z_t,t)\\
&-\frac{1+t}{2}((1-t)^2\Sigma + t\mathbf{I}_d)^{-1}\sum_{j=1}^{J}z_0^j \frac{\partial w_j(z_t,t)}{\partial t}.
\end{aligned}
\end{equation}
To bound the time derivative \( \frac{\partial f(z_t, t)}{\partial t} \) in Eq.~\eqref{thm_euler_dfdt}, we apply Assumption \ref{assum2}, Lemma \ref{lem_wi_t} and 
Eq.~\eqref{lem_dwidzt_eq1}, yielding
\begin{equation}\label{dfdt}
\begin{aligned}
\left\|\frac{\partial f(z_t, t)}{\partial t}\right\|_2
\le&\frac{\lambda_{\max}\|z_t\|_2}{\min\{\lambda_{\min},\frac{1}{2}\}} + \frac{(1+2\lambda_{\max})^2\|z_t\|_2}{2\min\{\lambda^2_{\min},\frac{1}{4}\}} + \frac{M}{2\min\{\lambda_{\min},\frac{1}{2}\}}\\
&+\frac{(1+2\lambda_{\max})M}{\min\{\lambda^2_{\min},\frac{1}{4}\}} +\dfrac{(2+2\lambda_{\max})(\|z_t\|_2 + M)^2M}{\min\{\lambda^3_{\min},\frac{1}{8}\}}\\
\le&\frac{\lambda_{\max}\|z_t\|_2}{4\min\{\lambda^3_{\min},\frac{1}{8}\}} + \frac{(1+2\lambda_{\max})^2\|z_t\|_2}{4\min\{\lambda^3_{\min},\frac{1}{8}\}} + \frac{M}{8\min\{\lambda^3_{\min},\frac{1}{8}\}}\\
&+\frac{(1+2\lambda_{\max})M}{2\min\{\lambda^3_{\min},\frac{1}{8}\}} +\dfrac{(2+2\lambda_{\max})(\|z_t\|_2 + M)^2M}{\min\{\lambda^3_{\min},\frac{1}{8}\}}\\
\le&C\frac{(1+\lambda_{\max})^2(1+M)(\|z_t\|_2 + M)^2}{\min\{\lambda^3_{\min},\frac{1}{8}\}},
% <&C\frac{(1+\lambda_{\max})^2(1+\|z_t\|_2 + M)^3}{\min\{\lambda^3_{\min},\frac{1}{8}\}},
\end{aligned}
\end{equation}
where $C$ is a constant independent of $\lambda_{\max}, \lambda_{\min}, \|z_t\|_2$ and $M$.
Combining Eqs.~\eqref{dfdzt}, \eqref{zt''}, \eqref{f_norm}, and~\eqref{dfdt}, we obtain
\begin{equation}\label{zt''2}
\left\|\frac{d^2 z_t}{dt^2}\right\|_2 
\le C\frac{(1+\lambda_{\max})^2(1+M)(1 + \|z_t\|_2 + M)^2}{\min\{\lambda^3_{\min},\frac{1}{8}\}}.
\end{equation}
Therefore, by the global error of the Euler scheme, together with Eq.~\eqref{dfdzt} and Eq.~\eqref{zt''2}, we have
\begin{equation}
\begin{aligned}
\left\|z_0 - z^K \right\|_2&\le\frac{\exp\left(\frac{1+2\lambda_{\max}+8M^2}{4\min\{\lambda^2_{\min},\frac{1}{4}\}}\right) - 1}{2\frac{1+2\lambda_{\max}+8M^2}{4\min\{\lambda^2_{\min},\frac{1}{4}\}}} C\frac{(1+\lambda_{\max})^2(1+M)(1 + \|z_t\|_2 + M)^2}{\min\{\lambda^3_{\min},\frac{1}{8}\}}h\\
&\le C\exp\left(\frac{1+2\lambda_{\max}+8M^2}{4\min\{\lambda^2_{\min},\frac{1}{4}\}}\right)\frac{(1+\lambda_{\max})^2(1+M)(1 + \|z_t\|_2 + M)^2}{(1+2\lambda_{\max}+8M^2)\min\{\lambda_{\min},\frac{1}{2}\}}h.
\end{aligned}
\end{equation}
Applying the bound of $\|z_t\|_2$ in Lemma \eqref{lem_zt_bound}, we obtain
\begin{equation}
    \begin{aligned}
        \left\|z_0 - z^K \right\|_2 \le& C\exp\left(\frac{1+2\lambda_{\max}+8M^2}{4\min\{\lambda^2_{\min},\frac{1}{4}\}}\right)\\
        &\cdot\frac{(1+\lambda_{\max})^2(1+M)(1 + \frac{\max\{\sqrt{\lambda_{\max}}, 1\}}{\min\{\sqrt{\lambda_{\min}}, \frac14\}}(M+\|z_1\|_2) + M)^2}{(1+2\lambda_{\max}+8M^2)\min\{\lambda_{\min},\frac{1}{2}\}}h,
    \end{aligned}      
\end{equation}
where $C$  is a constant independent of $\lambda_{\max}, \lambda_{\min}, \|z_1\|_2, M, h$ and  dimension $d$. The proof is complete. 
\end{proof}

% \begin{remark}
% The above estimate characterizes the worst-case behavior of the numerical error along the most unstable ODE trajectory, leading to an exponential dependence on $M$. In practice, such extreme cases rarely occur. After averaging over the initial condition $\bar z_1$, the empirical error exhibits an approximately linear dependence on $M$. This observation will be further supported by our numerical experiment in Section~\ref{sec:ne1} (see Fig.~3).
% \end{remark}

\subsubsection{The expected error bound for the reverse ODE}
\label{sec:ode_error}
We now analyze the expected global error of the reverse ODE with a Gaussian initial condition $z_1\sim\mathcal{N}(0,\mathbf{I}_d)$. Since the integration of Eq.~\eqref{eq:ode3.1.1} proceeds backward from $t=1$ to $t=0$, different realizations of $z_1$ produce different pathwise global errors, see Eq.~\eqref{thm_euler_eq1} in Theorem \ref{thm_euler}. Therefore, it is natural to quantify the error in expectation with respect to the distribution of $z_1$. We are now ready to state our main result.

\begin{thm}\label{thm_main1}
Under Assumptions~\ref{assum1}, \ref{assum2}, consider the probability flow ODE in Eq.~\eqref{eq:ode3.1.1} with initial condition $z_1\sim\mathcal{N}(0,\mathbf{I}_d)$. Let $z_0$ denote the exact solution at $t=0$, and let $z^{K}$ be the numerical solution produced by the Euler scheme in Eq.~\eqref{eq:euler}. Then the expected $\ell_2$-norm of the global error satisfies
    \begin{equation}\label{eq:l2_err}
        \mathbb{E}_{Z_1}\left\|z_0-z^K\right\|_2\le Cdh,
    \end{equation}
where $C$  is a constant independent of the time step $h$ and the dimension $d$.
\end{thm}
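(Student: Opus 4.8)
The plan is to obtain the expected bound directly from the pathwise estimate of Theorem~\ref{thm_euler} by integrating against the Gaussian law of $Z_1$. First I would observe that the bound in Eq.~\eqref{thm_euler_eq1} has the structure
$\|z_0 - z^K\|_2 \le C_1\bigl(1 + C_2(M + \|z_1\|_2)\bigr)^2 h$,
where $C_1$ and $C_2$ collect all the factors involving $\lambda_{\min}$, $\lambda_{\max}$, $M$, and the universal constant from Theorem~\ref{thm_euler}; crucially, $C_1$ and $C_2$ are independent of $h$, of the dimension $d$, and of $z_1$. Hence the dimension can enter only through the expectation of a quadratic polynomial in $\|z_1\|_2$.

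Second, I would expand the square and apply $(a+b)^2 \le 2a^2 + 2b^2$ repeatedly to bound $\bigl(1 + C_2(M + \|z_1\|_2)\bigr)^2$ by an affine combination of $1$, $\|z_1\|_2$, and $\|z_1\|_2^2$ with coefficients depending only on $C_2$ and $M$. Taking $Z_1 \sim \mathcal{N}(0,\mathbf{I}_d)$, I would then invoke the elementary Gaussian moment identity $\mathbb{E}\|Z_1\|_2^2 = d$ together with Jensen's inequality, $\mathbb{E}\|Z_1\|_2 \le \bigl(\mathbb{E}\|Z_1\|_2^2\bigr)^{1/2} = \sqrt{d}$. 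Since $d \ge 1$ gives $1 \le d$ and $\sqrt{d} \le d$, every term is $\mathcal{O}(d)$, so $\mathbb{E}_{Z_1}\bigl(1 + C_2(M + \|Z_1\|_2)\bigr)^2 \le C_3\, d$ for a constant $C_3$ depending only on $C_2$ and $M$.

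Combining the two steps yields $\mathbb{E}_{Z_1}\|z_0 - z^K\|_2 \le C_1 C_3\, d\, h =: C d h$ with $C$ independent of $h$ and $d$, which is exactly Eq.~\eqref{eq:l2_err}. I do not expect a genuine obstacle: this is a routine moment computation. The one point deserving attention is that the linear-in-$d$ rate (rather than $d^2$ or worse) hinges on the pathwise bound of Theorem~\ref{thm_euler} being only \emph{quadratic} in $\|z_1\|_2$ — had the trajectory envelope in Lemma~\ref{lem_zt_bound} or the second-derivative estimate in Eq.~\eqref{zt''2} produced a higher power of $\|z_1\|_2$, the corresponding Gaussian moment would scale with a higher power of $d$. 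The only real bookkeeping is to verify that each absorbed constant depends at most on $\lambda_{\min}, \lambda_{\max}, M$ and never secretly on $d$.
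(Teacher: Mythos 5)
Your proposal is correct and follows the same strategy as the paper: apply the pathwise bound of Theorem~\ref{thm_euler}, take expectation over $Z_1 \sim \mathcal{N}(0,\mathbf{I}_d)$, observe the bound is quadratic in $\|z_1\|_2$, and invoke $\mathbb{E}\|Z_1\|_2^2 = d$ together with $\mathbb{E}\|Z_1\|_2 \le \sqrt{d}$ to conclude the $\mathcal{O}(dh)$ rate. The only cosmetic difference is that you derive $\mathbb{E}\|Z_1\|_2 \le \sqrt{d}$ via Jensen's inequality, whereas the paper quotes the chi-distribution mean $\sqrt{2}\,\Gamma\bigl(\tfrac{d+1}{2}\bigr)/\Gamma\bigl(\tfrac{d}{2}\bigr)$ and then bounds it by $\sqrt{d}$; both yield the same estimate.
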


\begin{proof}
By Theorem~\ref{thm_euler}, for a fixed $z_1$, the resulting global approximation error for the discretized dynamics \eqref{eq:euler} applied to the probability flow ODE in Eq.~\eqref{eq:ode3.1.1} is bounded as follows:
\begin{equation}
    \begin{aligned}
        \left\|z_0 - z^K \right\|_2 \le& C\exp\left(\frac{1+2\lambda_{\max}+8M^2}{4\min\{\lambda^2_{\min},\frac{1}{4}\}}\right)\\
        &\cdot\frac{(1+\lambda_{\max})^2(1+M)(1 + \frac{\max\{\sqrt{\lambda_{\max}}, 1\}}{\min\{\sqrt{\lambda_{\min}}, \frac14\}}(M+\|z_1\|_2) + M)^2}{(1+2\lambda_{\max}+8M^2)\min\{\lambda_{\min},\frac{1}{2}\}}h,
    \end{aligned}      
\end{equation}
Taking the expectation with respect to $Z_1$, we obtain
\begin{equation}\label{eq:E(error_2)}
\begin{aligned}
\mathbb{E}_{Z_1}\left\|z_0-z^K\right\|_2
% \le&\mathbb{E}_{Z_1}\left[C\exp\left(\frac{1 + 2M^2}{\lambda_{\min}^2}\right)\frac{(C\|z_1\|_2 +CM + M)^2}{\lambda_{\min}M}h\right]\\
\le&Ch\mathbb{E}_{Z_1}\left[(\|z_1\|_2 + 1)^2 \right],
\end{aligned}
\end{equation}
where $C$  is a constant independent of the time step $h$ and the dimension $d$.
Since $Z_1 \sim \mathcal{N}(0,\mathbf{I}_d)$, $\|Z_1\|_2$ has the chi distribution with $d$ degrees of freedom, and
\begin{equation}
\mathbb{E}_{Z_1}[\|z_1\|_2^2]=d,
\qquad
\mathbb{E}_{Z_1}[\|z_1\|_2]
= \sqrt{2}\,\frac{\Gamma\big(\frac{d+1}{2}\big)}{\Gamma\big(\frac{d}{2}\big)}
\le \sqrt{d} .
\end{equation}
Substituting these into Eq.~\eqref{eq:E(error_2)} yields
\begin{equation}
\mathbb{E}_{Z_1}\|z_0 - z^K\|_2 \le Ch\big(d + 2\sqrt{d} + 1\big) \le Cdh,
\end{equation}
where $C$ is independent of the time step $h$ and the dimensionality $d$.
\end{proof}

Having established an expected global error bound in the $\ell_2$ norm, we next derive an analogous estimate in the $\ell_\infty$ norm. The $\ell_\infty$ control is complementary to the $\ell_2$ result and  provides a coordinatewise guarantee and is particularly useful when one seeks uniform accuracy across components. In the following, we quantify the expected $\ell_\infty$-norm of the global discretization error under the same setting.

\begin{thm}\label{thm_main2}
Assume Assumptions~\ref{assum1}, \ref{assum2} and  the Gaussian mixture model in Eq.~\eqref{eq:gmm} has a diagonal covariance matrix $\Sigma$. 
% Consider the probability flow ODE in Eq.~\eqref{eq:ode3.1.1} with initial condition $z_1\sim\mathcal{N}(0,\mathbf{I}_d)$. 
% Let $z_0$ denote the exact solution at $t=0$, and let $z^{K}$ be the numerical solution produced by the Euler scheme in Eq.~\eqref{eq:euler}. 
 Then the expected $\ell_\infty$-norm of the global error satisfies
    \begin{equation}\label{eq:l_inf_err}
        \mathbb{E}_{Z_1}\left\|z_0-z^K\right\|_{\infty}\le C(\ln(d) + 1)h,
    \end{equation}
where $C$  is a constant independent of the time step $h$ and the dimension $d$.
\end{thm}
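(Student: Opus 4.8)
The plan is to mirror the two‑step strategy behind Theorems~\ref{thm_euler} and~\ref{thm_main1}, but carried out coordinatewise and in the $\ell_\infty$ norm, and then to replace $\mathbb{E}_{Z_1}\|z_1\|_2^2=d$ by the Gaussian maximal bound $\mathbb{E}_{Z_1}\|z_1\|_\infty^2=\mathcal{O}(\log d)$ at the very end. First I would exploit the diagonality of $\Sigma$: with $\Sigma=\diag(\sigma_1^2,\dots,\sigma_d^2)$, every matrix factor appearing in Eq.~\eqref{eq:ode3.1.1} and in the variation‑of‑parameters formula Eq.~\eqref{ode3.3_zt_3}, namely $((1-t)^2\Sigma+t\mathbf{I}_d)^{\pm k}$, is diagonal, so its $\ell_\infty\!\to\!\ell_\infty$ operator norm equals its $\ell_2\!\to\!\ell_2$ norm, and the inhomogeneous term $\sum_j z_0^j w_j(z_t,t)$ is a convex combination of the $z_0^j$, hence bounded in $\ell_\infty$ by $\max_j\|z_0^j\|_\infty\le M$. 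Repeating the argument of Lemma~\ref{lem_zt_bound} coordinate by coordinate then yields the per‑coordinate a priori envelope $|z_t^{(i)}|\le C(|z_1^{(i)}|+M)$ for all $t\in[0,1]$, with $C$ depending only on $\lambda_{\min},\lambda_{\max}$; this is the step where diagonality is essential, since otherwise $((1-t)^2\Sigma+t\mathbf{I}_d)^{1/2}z_1$ mixes all components of $z_1$ into $z_t^{(i)}$.

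Next I would derive per‑coordinate versions of the derivative estimates in Theorem~\ref{thm_euler}. Each component $f_i$ of the drift in Eq.~\eqref{eq:ode3.1.1} is an affine function of $z_t^{(i)}$ plus the weighted mean $-\tfrac{1+t}{2((1-t)^2\sigma_i^2+t)}\sum_j z_{0,i}^j w_j(z_t,t)$, so $|f_i|\le C(|z_t^{(i)}|+M)$. The key point is that $w_j(z_t,t)$ is a softmax in the linear statistics $\langle z_t,v_j\rangle$ with $v_j:=(1-t)((1-t)^2\Sigma+t\mathbf{I}_d)^{-1}z_0^j$, $\|v_j\|_2\le CM$; hence $\nabla_{z_t}w_j=w_j(v_j-\sum_{j'}w_{j'}v_{j'})$ (the computation already made in the proof of Lemma~\ref{lem_dwidzt}) and, summing against $z_{0,i}^j$, the Jacobian entry $\partial_{z^{(\ell)}}f_i$ is, up to a bounded scalar, the weighted covariance $\operatorname{Cov}_w(z_{0,i},z_{0,\ell})$. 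Using $\sum_i \operatorname{Var}_w(z_{0,i})=\mathbb{E}_w\|z_0-\bar z_0\|_2^2\le M^2$ together with the positive‑semidefiniteness bound $\sum_\ell|\operatorname{Cov}_w(z_{0,i},z_{0,\ell})|^2\le M^2\operatorname{Var}_w(z_{0,i})$, one controls the cross‑coordinate contributions, and an analogous covariance representation of $\partial_t w_j$ replaces the crude factor $(\|z_t\|_2+M)^2$ of Lemma~\ref{lem_wi_t}. Putting these together gives a per‑coordinate second‑derivative bound of the form $\max_{t}|\ddot z_t^{(i)}|\le C(1+|z_1^{(i)}|+M)^2$ with $C$ independent of $d$, together with an $\ell_\infty$‑stable Euler recursion.

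With the per‑coordinate consistency and stability estimates in hand, the standard global‑error bound for the explicit Euler scheme, applied componentwise, gives the pathwise estimate $\|z_0-z^K\|_\infty\le Ch\,(1+\|z_1\|_\infty+M)^2$, the $\ell_\infty$ analogue of Eq.~\eqref{thm_euler_eq1}. Taking the expectation over $z_1\sim\mathcal{N}(0,\mathbf{I}_d)$ and using that the coordinates $z_1^{(i)}$ are i.i.d.\ $\mathcal{N}(0,1)$, so that $\mathbb{E}_{Z_1}\|z_1\|_\infty\le\sqrt{2\log(2d)}$ and $\mathbb{E}_{Z_1}\|z_1\|_\infty^2\le C(\log d+1)$ by the Gaussian maximal inequality (sub‑Gaussian tails of the maximum), one concludes $\mathbb{E}_{Z_1}\|z_0-z^K\|_\infty\le C(\log d+1)h$, as claimed.

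\textbf{The main obstacle} I anticipate is precisely the cross‑coordinate coupling through the weight function in the second step: because each $w_j$ depends on the whole vector $z_t$, the component dynamics are not genuinely decoupled, and a naive bound on the off‑diagonal Jacobian entries $\operatorname{Cov}_w(z_{0,i},z_{0,\ell})$ (or on $\partial_t w_j$) reintroduces factors of $\|z_t\|_2$, hence of $\sqrt d$, into the per‑coordinate error bound. Showing that these terms contribute only dimension‑independent constants — so that the pathwise bound really depends on $z_1$ solely through $\|z_1\|_\infty$ and $M$ — is the heart of the argument, and it is here that both the diagonal‑$\Sigma$ hypothesis and the trace bound $\sum_i\operatorname{Var}_w(z_{0,i})\le M^2$ must be used carefully; the remaining steps are routine once this is settled.
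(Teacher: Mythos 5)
Your proposal follows the same structural blueprint as the paper's proof: use the diagonality of $\Sigma$ to argue that the pathwise $\ell_2$ estimate of Theorem~\ref{thm_euler} transfers to $\ell_\infty$, and then close with the Gaussian maximal bounds $\mathbb{E}_{Z_1}\|z_1\|_\infty\le\sqrt{2\ln(2d)}$ and $\mathbb{E}_{Z_1}\|z_1\|_\infty^2\lesssim\ln d$, which you and the paper both obtain by the standard exponential-moment/Jensen argument. The difference is that the paper's proof is extremely terse: it simply asserts that ``all matrix-valued quantities\ldots remain diagonal'' and that ``every matrix norm estimate established earlier in the $\ell_2$ setting continues to hold verbatim when $\|\cdot\|_2$ is replaced by $\|\cdot\|_\infty$,'' and then invokes Theorem~\ref{thm_euler} with $\|z_1\|_2$ formally replaced by $\|z_1\|_\infty$. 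You instead try to rebuild the derivative estimates coordinatewise, and in doing so you correctly put your finger on the place where the substitution is actually non-trivial.

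The obstacle you flag is genuine, and it is a gap in the paper's own argument as well, not just in yours. The diagonality observation is valid for the factors $((1-t)^2\Sigma+t\mathbf{I}_d)^{\pm k}$ and $\mathbf{I}_d-2(1-t)\Sigma$, whose $\ell_2$-to-$\ell_2$ and $\ell_\infty$-to-$\ell_\infty$ operator norms coincide. It is \emph{not} valid for the Jacobian $\partial f/\partial z_t$ appearing in Eq.~\eqref{dfdzt}: that Jacobian contains the non-diagonal block $\sum_j z_0^j(\nabla_{z_t}w_j)^\top$, whose $\ell_\infty\to\ell_\infty$ norm is controlled by $\|z_0^j\|_\infty\|\nabla_{z_t}w_j\|_1$, and $\|\nabla_{z_t}w_j\|_1$ can be as large as $\sqrt{d}\,\|\nabla_{z_t}w_j\|_2$. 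Likewise, the factor $(\|z_t\|_2+M)^2$ in Lemma~\ref{lem_wi_t} is not a matrix-norm quantity at all; it arises from the scalar bilinear form $z_t^\top D\,z_0^j$ and cannot be replaced by $(\|z_t\|_\infty+M)^2$ without a new argument (the substitution would \emph{strengthen} the bound, since $\|z_t\|_\infty\le\|z_t\|_2$). The paper's proof does not touch either of these points.

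Your covariance reformulation is the right lever, and your per-coordinate a priori envelope $|z_t^{(i)}|\le C(|z_1^{(i)}|+M)$ from the diagonal Duhamel formula is correct. But, as you yourself note, the worst-case bounds you can get from $\sum_i\operatorname{Var}_w(z_{0,i})\le M^2$ still produce, e.g., $\sum_\ell|\operatorname{Cov}_w(z_{0,i},z_{0,\ell})|\lesssim\sqrt{d}\,M^2$, and the time-derivative term $\sum_j z_{0,i}^j\partial_t w_j=\operatorname{Cov}_w(z_{0,i},\psi)$ retains an explicit $\|z_t\|_2$ through $\operatorname{Var}_w(\psi)$. So your proposal, like the paper's, stops short of a complete coordinatewise stability argument. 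Closing the gap would require showing that the weight concentration (the rank-at-most-$J$ Jacobian block and $\operatorname{Var}_w(\psi)$ shrink precisely when $\|z_t\|_2$ is large) makes the product dimension-free, or imposing an additional structural hypothesis on the mode geometry. Your diagnosis of where the difficulty lies is sharper than the paper's; your resolution is incomplete for the same reason the paper's is.
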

\begin{proof}
When the covariance matrix $\Sigma$ in \eqref{eq:gmm} is diagonal, all matrix-valued quantities that arise in the probability flow ODE and in the associated Euler analysis remain diagonal as well. For a diagonal matrix, the induced operator $\ell_2$-norm and $\ell_\infty$-norm coincide. Moreover, by Assumption~\ref{assum2} we have, for each $j=1,\dots,J$,
\[
\|z_0^j\|_\infty \le \|z_0^j\|_2 \le M,
\]
so the data bound can be stated equivalently in the $\ell_\infty$ norm. Combining these observations, every matrix norm estimate established earlier in the $\ell_2$ setting continues to hold verbatim when $\|\cdot\|_2$ is replaced by $\|\cdot\|_\infty$. In particular, we may invoke Theorem~\ref{thm_euler} to obtain a pathwise $\ell_\infty$ norm bound for the Euler discretization error for a fixed initial condition $z_1$
\begin{equation}
    \begin{aligned}
        \left\|z_0 - z^K \right\|_\infty \le& C\exp\left(\frac{1+2\lambda_{\max}+8M^2}{4\min\{\lambda^2_{\min},\frac{1}{4}\}}\right)\\
        &\cdot\frac{(1+\lambda_{\max})^2(1+M)(1 + \frac{\max\{\sqrt{\lambda_{\max}}, 1\}}{\min\{\sqrt{\lambda_{\min}}, \frac14\}}(M+\|z_1\|_\infty) + M)^2}{(1+2\lambda_{\max}+8M^2)\min\{\lambda_{\min},\frac{1}{2}\}}h,
    \end{aligned}      
\end{equation}
Taking the expectation with respect to $Z_1$, we obtain
\begin{equation}\label{Ez0-uk}
\begin{aligned}
\mathbb{E}_{Z_1}\left\|z_0-z^K\right\|_{\infty}
% \le&\mathbb{E}_{Z_1}\left[C\exp\left(\frac{1 + 2M^2}{\lambda_{\min}^2}\right)\frac{(C\|z_1\|_{\infty} +CM + M)^2}{\lambda_{\min}M}h\right]\\
\le&Ch\mathbb{E}_{Z_1}\left[(\|z_1\|_{\infty} + 1)^2 \right]
\end{aligned}
\end{equation}
where $C$  is a constant independent of the time step $h$ and the dimension $d$.

The next step is to estimate the expectations $\mathbb{E}_{Z_1}\|z_1\|_{\infty}$ and $\mathbb{E}_{Z_1}\|z_1\|_{\infty}^2$. Let $z_1^i$ denote the $i$-th component of $z_1$, where $z_1^i,\ldots,z_1^d$ are independent and identically distributed as $\mathcal{N}(0, 1)$. Applying Jensen's inequality,  we have, for $t>0$, 
\begin{equation*}
\begin{aligned}
\exp(t\mathbb{E}_{Z_1}\|z_1\|_{\infty})\leq&\mathbb{E}_{Z_1}[\exp(t\|z_1\|_{\infty})] = \mathbb{E}_{Z_1}[\exp(t\max_{1\leq i \leq d}|z_1^i|)] =  \mathbb{E}_{Z_1}[\max_{1\leq i \leq d}\exp(t|z_1^i|)]\\
\leq&\mathbb{E}_{Z_1}\left[\sum_{i=1}^d\left(\exp(tz_1^i) + \exp(-tz_1^i)\right)\right] \\
=& \sum_{i=1}^d\left(\mathbb{E}_{Z_1}[\exp(tz_1^i)] + \mathbb{E}_{Z_1}[\exp(-tz_1^i)]\right) = 2d\exp(t^2/2).
\end{aligned}
\end{equation*}
Taking the natural logarithm on both sides, we obtain
\begin{equation*}
\begin{aligned}
    \mathbb{E}_{Z_1}\|z_1\|_{\infty}&\leq\frac{\ln(2d)}{t} + \frac{t}{2}, \quad \forall \ t>0.
\end{aligned}
\end{equation*}
Choosing the optimal value $t = \sqrt{2\ln(2d)}$ then yields
\begin{equation}\label{Ez1}
\begin{aligned}
\mathbb{E}_{Z_1}\|z_1\|_{\infty}&\leq \sqrt{2\ln(2d)}.
\end{aligned}
\end{equation}
Similarly, we estimate the second moment by noting
\begin{equation*}
\begin{aligned}
&\exp\left(\frac{1}{4}\mathbb{E}_{Z_1}\|z_1\|_{\infty}^2\right)\leq\mathbb{E}_{Z_1}\left[\exp\left(\frac{1}{4}\|z_1\|_{\infty}^2\right)\right] = \mathbb{E}_{Z_1}\left[\exp\left(\frac{1}{4}\max_{1\leq i \leq d}|z_1^i|^2\right)\right]\\
=&\mathbb{E}_{Z_1}\left[\max_{1\leq i \leq d}\exp\left(\frac{1}{4}|z_1^i|^2\right)\right]
\leq \mathbb{E}_{Z_1}\left[\sum_{i=1}^d\exp\left(\frac{1}{4}(z_1^i)^2\right)\right] = \sum_{i=1}^d\mathbb{E}_{Z_1}\left[\exp\left(\frac{1}{4}(z_1^i)^2\right)\right]\\
=& \sqrt{2}d.
\end{aligned}
\end{equation*}
Taking the natural logarithm of both sides in the above, we find
\begin{equation}\label{Ez1^2}
\begin{aligned}
    \mathbb{E}_{Z_1}\|z_1\|_{\infty}^2&\leq 4\ln(\sqrt{2}d).
\end{aligned}
\end{equation}
Combining Eqs.~\eqref{Ez0-uk}, \eqref{Ez1}, and \eqref{Ez1^2}, we conclude
\begin{equation*}
\mathbb{E}_{Z_1}\left\|z_0-z^K\right\|_{\infty}\le C(\ln(d) + 1)h,
\end{equation*}
as desired. This completes the proof.
\end{proof}

\subsection{Discussion on the error bounds} 
\subsubsection{The choice of the covariance matrix $\Sigma$}\label{sec:sigma}
In practice, choosing $\Sigma$ of the Gaussia mixture model in Eq.~\eqref{eq:gmm} with either extremely small or extremely large eigenvalues tends 
to degrade numerical accuracy. For example, when the covariance matrix is in a diagonal form $\Sigma = \sigma \mathbf{I}_d$, the analytical expression of the Lipschitz constant $L(\sigma)$ (as a function of $\sigma$) of the linear part of the reverse ODE in Eq.~\eqref{eq:ode3.1.1} is plotted in Fig.~\ref{fig:lipschitz_f}. We observe that  
%
% To build intuition, consider the simplified case \(\Sigma = \sigma I_d\). Bounding the 
% Lipschitz constant of the linear part of the reverse ODE yields the explicit piecewise expression
% \[
% L(\sigma)
% =
% \max_{0 \le t \le 1}
% \left|
% \frac{1 - 2(1-t)\sigma}{(1-t)^2 \sigma + t}
% \right|
% =
% \begin{cases}
% \dfrac{1}{\sigma} - 2,
% & 0 < \sigma \le 1 - \dfrac{\sqrt{2}}{2}, \\[10pt]
% \dfrac{2\sigma}{\sqrt{4\sigma - 1}},
% & 1 - \dfrac{\sqrt{2}}{2} < \sigma < \dfrac{1}{2}, \\[10pt]
% 1,
% & \dfrac{1}{2} \le \sigma \le 1, \\[10pt]
% 2 - \dfrac{1}{\sigma},
% & 1 < \sigma \le 1 + \dfrac{\sqrt{2}}{2}, \\[10pt]
% \dfrac{2\sigma}{\sqrt{4\sigma - 1}},
% & \sigma > 1 + \dfrac{\sqrt{2}}{2}.
% \end{cases}
% \]
% The graph of \(L(\sigma)\) is shown in Fig.~1: 
the Lipschitz constant initially 
decreases as \(\sigma\) grows, reaches a favorable plateau, and then increases again. 
This captures the intrinsic trade-off in tuning the covariance scale, i.e., too small  
spread leads to stiffness, while too large spread weakens the contraction structure 
of the flow. Our numerical experiments in Section~4.1 verify this qualitative 
behavior and indicate that an intermediate range of \(\sigma\) achieves the most 
stable and accurate integration.

% \end{remark}

% Assumption of eigenvalues of $\Sigma$ can in fact be omitted: Theorems \ref{thm_main1} and \ref{thm_main2} still hold. The covariance matrix $\Sigma$ affects only the constant factor. We retain this assumption solely for expository convenience. In practice, choosing the eigenvalues of $\Sigma$ to be either too large or too small is undesirable. We will corroborate this decrease–increase behavior in the numerical experiments of Section \ref{sec:ne1}; see Fig~4. We first present an intuitive bound. Assume $\Sigma=\sigma I_d$. By considering the upper bound on the Lipschitz constant of the linear part of the ODE, we obtain
% \begin{equation}
% L(\sigma):=\max_{0\le t\le 1}\left|\frac{1-2(1-t)\sigma}{(1-t)^2\sigma + t}\right|=
% \begin{cases}
% \dfrac{1}{\sigma}-2, & 0<\sigma \le 1-\tfrac{\sqrt{2}}{2}, \\[10pt]
% \dfrac{2\sigma}{\sqrt{\,4\sigma-1\,}}, & 1-\tfrac{\sqrt{2}}{2}<\sigma<\tfrac{1}{2}, \\[12pt]
% 1, & \tfrac{1}{2}\le \sigma \le 1, \\[10pt]
% 2-\dfrac{1}{\sigma}, & 1<\sigma \le 1+\tfrac{\sqrt{2}}{2}, \\[12pt]
% \dfrac{2\sigma}{\sqrt{\,4\sigma-1\,}}, & \sigma>1+\tfrac{\sqrt{2}}{2}.
% \end{cases}
% \end{equation}
% Its graph is shown in Figure~\ref{fig:lipschitz_f}.
% \begin{figure}[h!]
%   \centering
%   \includegraphics[width=.5\linewidth]{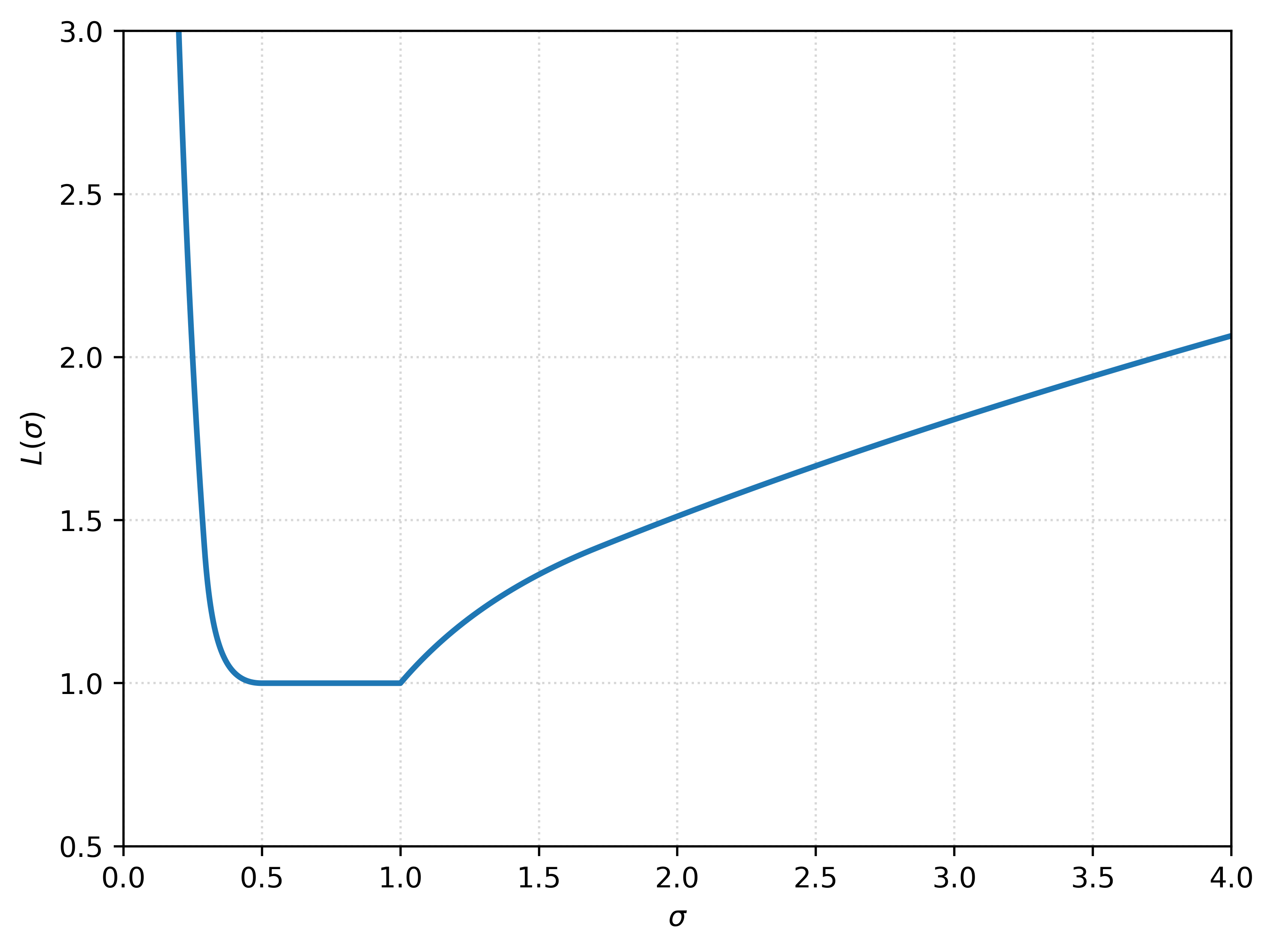}
%   \caption{Upper bound of the Lipschitz constant $L(\sigma)$ of the linear part of the reverse ODE in Eq.~\eqref{eq:ode3.1.1}. The curve first decreases, and then increases. This trend will be confirmed empirically in Section~4.1 (Fig.~4).}
%   \label{fig:lipschitz_f}
% \end{figure}

\subsubsection{The dimensionality dependence}
A typical origin of the curse of dimensionality in global error estimates for numerical ODE solvers is the exponential amplification from Grönwall-type bounds when the drift has a large Lipschitz 
\begin{wrapfigure}{r}{0.45\linewidth}
\vspace{-0.3cm}
  \centering
  \includegraphics[width=\linewidth]{}
  \vspace{-0.5cm}
  \caption{Upper bound of the Lipschitz constant $L(\sigma)$ of the linear part of the reverse ODE in Eq.~\eqref{eq:ode3.1.1}. The curve first decreases, and then increases. This trend will be confirmed empirically in Section~4.1 (Fig.~\ref{fig:sigma}).}
  \vspace{-0.4cm}
  \label{fig:lipschitz_f}
\end{wrapfigure}
constant.
If the effective Lipschitz constant $L$ grows with the ambient dimension $d$, the error prefactor can behave like $\exp(CL)$, where $C$ is a positive constant independent of time step h.

In our setting, the drift of Eq.~\eqref{eq:ode3.1.1} depends on the Gaussian–mixture weights $w_i(z_t,t)$. Lemma~\ref{lem_dwidzt} bounds the spatial sensitivity, which yields a uniform control of the Jacobian of the drift with respect to $z_t$ that depends on $M$ and on spectral quantities of $\Sigma$ but does not blow up exponentially with $d$. Specifically, the estimate Eq.~\eqref{dfdzt} inside Theorem~\ref{thm_euler} provides a bounded Lipschitz constant for the drift, thereby preventing the exponential in $d$ amplification in the Grönwall factor and ensuring a first-order global error bound with a dimension-stable prefactor.
The only remaining $d$-dependence appears when taking expectation over the Gaussian initial condition $z_1\sim\mathcal{N}(0,\mathbf{I}_d)$: one has $\mathbb{E}\|z_1\|_2^2=d$ and $\mathbb{E}\|z_1\|_2\le \sqrt{d}$, which leads to the linear dependence in $d$ of Theorem~\ref{thm_main1}. Thus the expected error scales like $\mathcal{O}(dh)$ rather than $O(\exp(d)h)$. We note that the constants deteriorate as $\lambda_{\min}\downarrow 0$, reflecting ill-conditioning of the mixture components; under Assumptions~\ref{assum2}, spectral control keeps the drift’s Lipschitz modulus bounded and averts dimension-induced exponential growth. The same mechanism extends to non-diagonal $\Sigma$ under comparable spectral bounds.

% \subsubsection{The constants in the error bounds}
% Theorem \ref{thm_euler} characterizes the worst-case behavior of the numerical error along the most unstable ODE trajectory, leading to an exponential dependence on $M$. In practice, such extreme cases rarely occur. After averaging over the initial condition $z_1$, the empirical error exhibits an approximately linear dependence on $M$. This observation will be further supported by our numerical experiment in Section~\ref{sec:experiment1} (see Fig.~\ref{fig:m0}).

\section{Numerical experiments}
\label{sec:experiment}
We conduct a set of numerical experiments to validate the error bounds derived in Section~\ref{sec:analysis}. Since the analysis in Section~\ref{sec:analysis} accounts only for the reverse ODE discretization error under Assumption~\ref{assum1}, we choose the target distribution $p_X(x) = q_{Z_0}(z_0)$ to be a Gaussian mixture model defined as
\begin{equation}\label{eq:gmm_ex}
    q_{Z_0}(z_0) := \frac{1}{J} \sum_{j=1}^J 
    \phi\!\left(z_0; z_0^j, \Sigma\right),
\end{equation}
where $\phi(\cdot)$ denotes the Gaussian density function, $J$ is the number of mixture components, $\{z_0^j\}_{j=1}^J$ are the component means, and 
$
\Sigma
%= \operatorname{diag}(\sigma_1, \sigma_2, \ldots, \sigma_d)
$
is a shared diagonal covariance matrix. The specific choices of $\{z_0^j\}_{j=1}^J$ and $\Sigma$ are specified in each experiment.
By adopting the Gaussian mixture model in~\eqref{eq:gmm_ex} as the target distribution, we eliminate the contribution of the data error term in~\eqref{eq:error_split}. This allows us to isolate the reverse ODE discretization error and directly assess the sharpness of the theoretical error bounds established in Section~\ref{sec:analysis}.

%
% In Section~\ref{sec:experiment1} we verify the error bounds of the reverse ODE, focusing on how the error depends on the dimension, the time step size, and the covariance structure of the Gaussian mixture model. In Section~\ref{sec:experiment2} we examine the supervised learning approach of Section~\ref{sec:learning}, quantifying how the approximation error varies with network architecture and training data. 

\subsection{Verification of the discretization error bounds of the reverse ODE}\label{sec:experiment1}
we verify the error bounds of the reverse ODE, focusing on how the error depends on the dimension, the time step size, and the covariance structure of the Gaussian mixture model. To quantify the discretization error, we use second-order Heun method with a small step size $h_{\mathrm{ref}}=10^{-3}$  as a reference solution. In all experiments we consider a Gaussian mixture with $J=10$ modes. The mean vectors $\{z_0^{j}\}_{j=1}^{J}$ in Eq.~\eqref{eq:gmm_ex} are drawn differently depending on the evaluation norm.
For the experiments verifying error bounds in $\ell_2$ norm, we sample $\{z_0^{j}\}_{j=1}^{J}$ independently and uniformly from the ball of radius $M$ in $\mathbb{R}^d$, while for the experiments verifying the error bounds in $\ell_\infty$ norm, we sample independently and uniformly from the hypercube $[-M,M]^d$. The expectation of the error bounds in Theorems~\ref{thm_main1} and \ref{thm_main2} is approximated by averaging the errors of 1000 trajectories. 

\vspace{0.2cm}
\paragraph{Verifying the dimension dependence}
Fig.~\ref{fig:d} demonstrates the dimension dependence estimated in Theorems~\ref{thm_main1} and~\ref{thm_main2}. 
% The dimension is varied over $d\in\{10,100,1000,10000\}$, and the quantities $\mathbb{E}_{Z_1}\|z_0 - z^{1/h}\|_2$ and $\mathbb{E}_{Z_1}\|z_0 - z^{1/h}\|_{\infty}$ are plotted as functions of $d$, with four curves corresponding to $h\in{0.1,0.05,0.025,0.01}$.
The constant in Assumption~\ref{assum2} is set to $M = 1$, and the covariance matrix in Eq.~\eqref{eq:gmm_ex} is chosen as a full symmetric positive definite (SPD) matrix
$\Sigma = U \Lambda U^\top$, where $U\in\mathbb{R}^{d\times d}$ is a random orthogonal matrix and
$\Lambda=\mathrm{diag}(\sigma_1,\ldots,\sigma_d)$ collects the eigenvalues of $\Sigma$.
We generate $\{\sigma_k\}_{k=1}^d$ by periodically repeating the pattern $(0.1,0.2,0.3,0.4,0.5)$ until reaching dimension $d$.
% and the covariance matrix $\Sigma$ in Eq.~\eqref{eq:gmm_ex} are chosen as
% $
% \sigma_1 = \sigma_2 = \cdots = \sigma_d = 0.3.
% $
We observe that the $\ell_2$ error increases approximately linearly with the dimension $d$, and the $\ell_\infty$ error exhibits an $\mathcal{O}(\log d)$ growth rate. These scaling behaviors are consistent with the dimension dependence predicted by Theorems~\ref{thm_main1} and~\ref{thm_main2}.
\begin{figure}[h!]
\vspace{-0.3cm}
\centering
\includegraphics[width=0.4\linewidth]{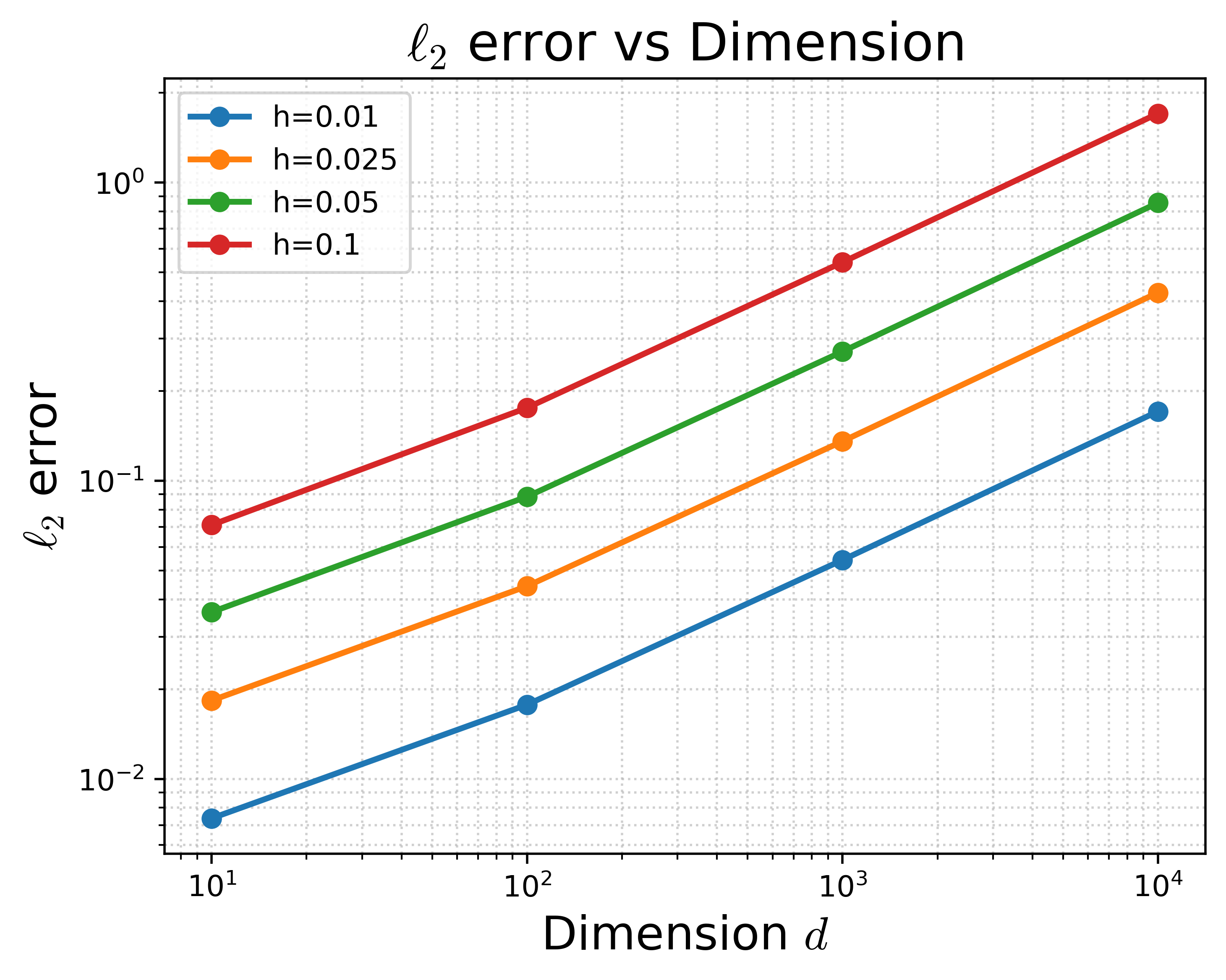}
\includegraphics[width=0.4\linewidth]{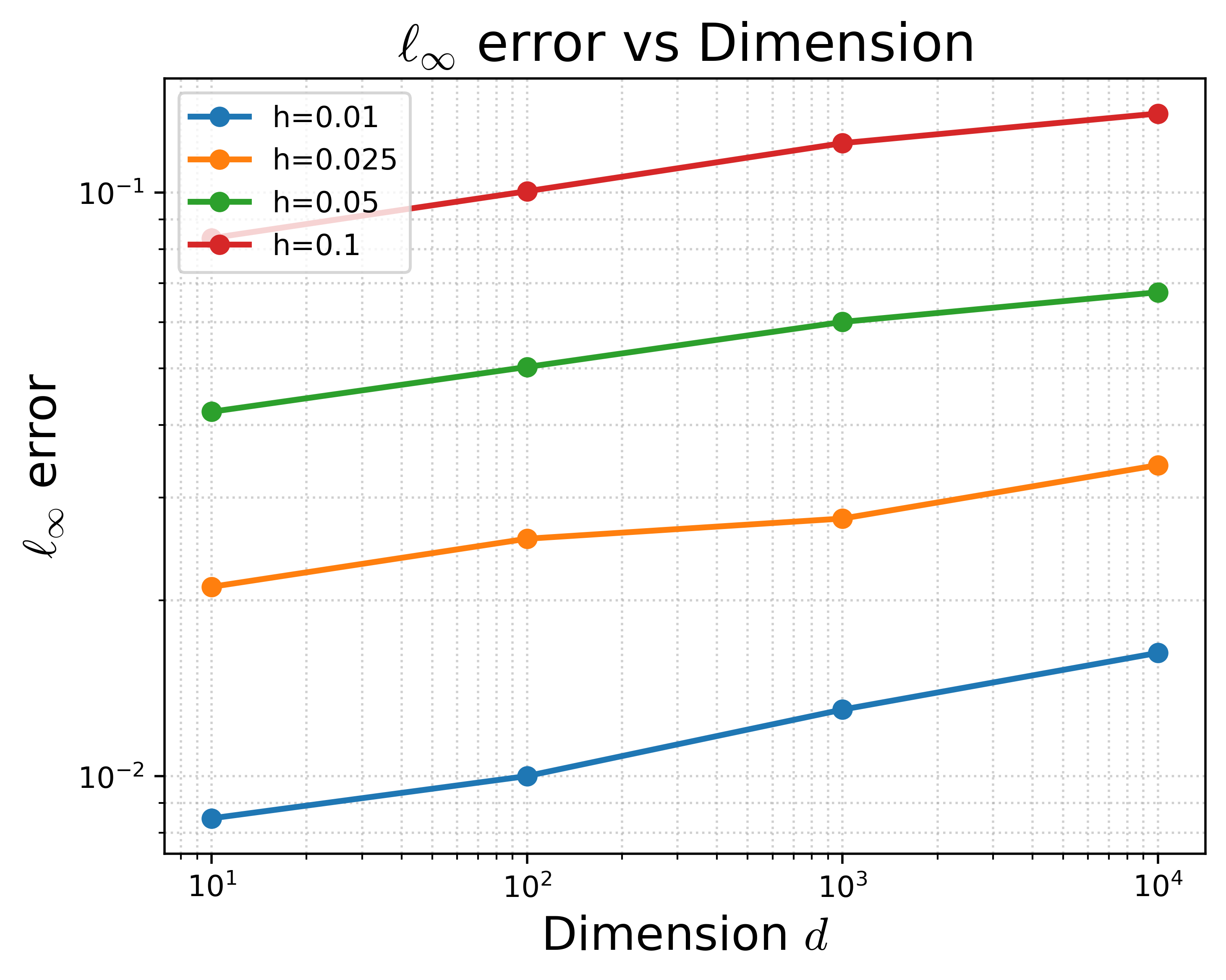}
\vspace{-0.2cm}
\caption{Demonstrating the error versus the dimension $d$ for $h\in \{0.1,0.05,0.025,0.01\}$ and $d \in \{10,100,1000,10000\}$. The $\ell_2$ error grows approximately on the order of $\mathcal{O}(d)$ and the $\ell_\infty$ error grows approximately on the order of $\mathcal{O}(\log d)$, verifying the dimension dependence in Eq.~\eqref{eq:l2_err} and Eq.~\eqref{eq:l_inf_err}.}
\label{fig:d}
\vspace{-0.5cm}
\end{figure}

\paragraph{Verifying the first-order temporal convergence rate}
We now verify the convergence rate with respect to the time step size $h$ presented in Theorem \ref{thm_main1} and Theorem \ref{thm_main2}. 
The time step size is set to $h \in \{0.01,0.02,0.05,0.1,0.2\}$, and the dimension is set to $d\in\{10,100,1000,10000\}$. The bound defined in Assumption \ref{assum2} is set to $M=1$, and We use the same full SPD covariance setting as above: $\Sigma = U\Lambda U^\top$ with $\Lambda=\mathrm{diag}(\sigma_1,\ldots,\sigma_d)$, where $\{\sigma_k\}_{k=1}^d$ repeats the pattern $(0.1,0.2,0.3,0.4,0.5)$.
% the standard deviations in Eq.~\eqref{eq:gmm_ex} are set to $\sigma_1 = \sigma_2 = \cdots = \sigma_d = 0.3$. 
% the quantities $\mathbb{E}_{Z_1}|z_0 - z^{1/h}|_2$ and $\mathbb{E}_{Z_1}|z_0 - z^{1/h}|_{\infty}$ are estimated using $n=1000$ Monte Carlo samples per point, with $M=1$.
% The covariance $\Sigma$ is taken to be a dense symmetric positive definite matrix with all eigenvalues equal to $0.3$.
Fig.~\ref{fig:h} shows that our method achieves a perfect first-order convergence rate with respect to the time step size $h$ in both $\ell_2$ and $\ell_\infty$ norm, which demonstrates the sharpness of our error bounds. 
\begin{figure}[h!]
\vspace{-0.2cm}
\centering
\includegraphics[width=0.4\linewidth]{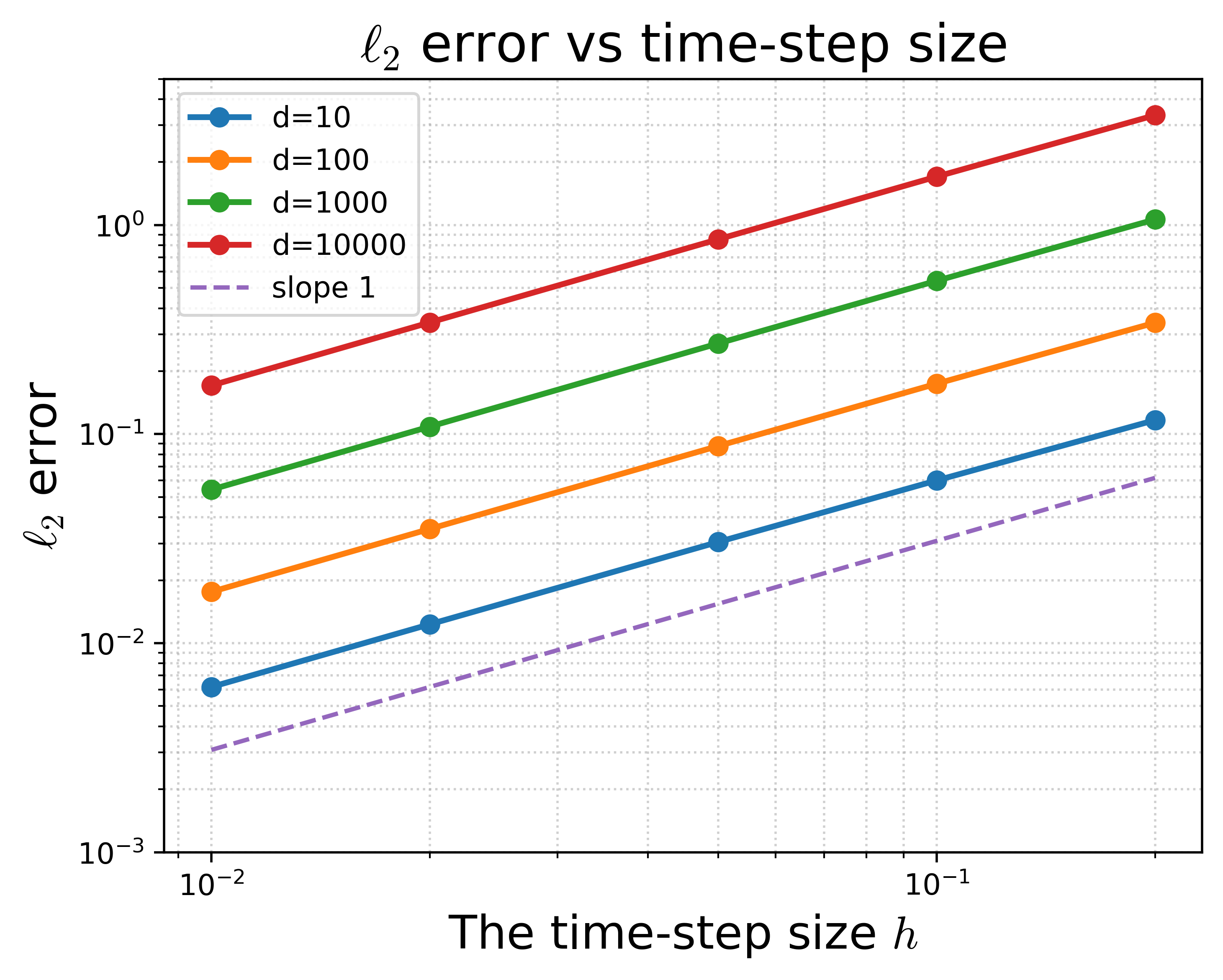}
\includegraphics[width=0.4\linewidth]{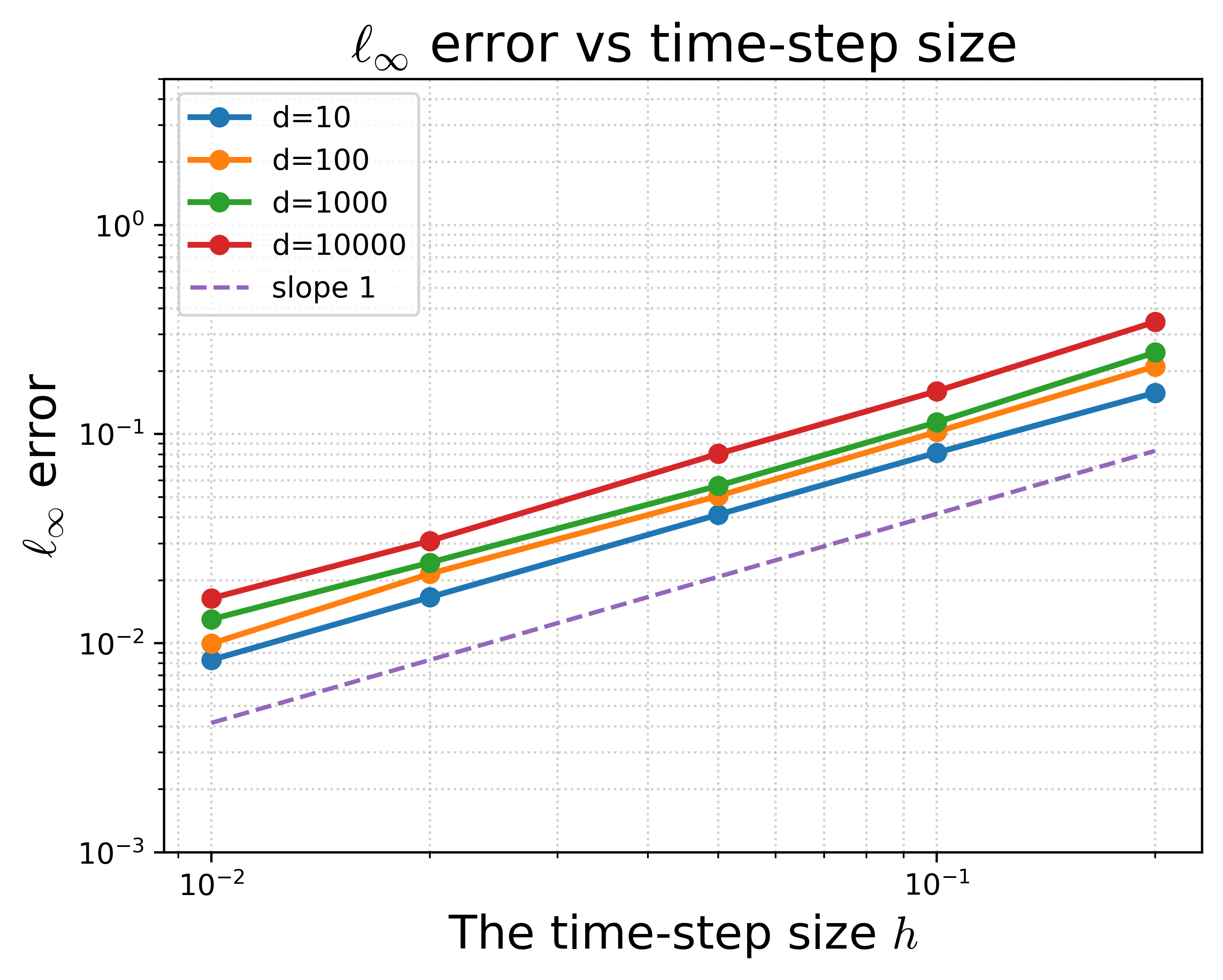}
\vspace{-0.2cm}
\caption{
Demonstrating the error versus the time-step size $h$ in ODE discretization for $h\in \{0.01, 0.02, 0.05, 0.1, 0.2\}$ and $d \in \{10,100,1000,10000\}$. Both the $\ell_2$ error and the $\ell_\infty$ error achieves perfect first-order convergence rate with respect to $h$ as shown in our error bounds in Eq.~\eqref{eq:l2_err} and Eq.~\eqref{eq:l_inf_err}.}
\label{fig:h}
\vspace{-0.5cm}
\end{figure}

\paragraph{Verifying the dependence on the smoothing constant $\sigma$}
We now verify the dependence of the error bound derived in Theorem \ref{thm_euler} on the smoothing constant $\sigma$. The covariance matrix in Eq.~\eqref{eq:gmm_ex} is chosen to be isotropic, $\Sigma=\sigma I_d$. We vary $\sigma\in\{0.1,0.2,0.3,0.4,0.5,0.6\}$ while fixing $d=10$, $h=0.01$, and $M_0=1$.
%  i.e., the standard deviation $\sigma = \sigma_1 = \cdots = \sigma_d$ defined in Eq.~\eqref{eq:gmm_ex}. The standard deviation is set to $\sigma\in\{0.1,0.2,0.3,0.4,0.5,0.6\}$ while other parameters are set to $d=10$, $h=0.01$, and $M=1$. 
Fig.~\ref{fig:sigma} 
shows that the $\ell_2$ and $\ell_\infty$ errors decrease for small $\sigma$ and then increase as $\sigma$ grows, which is consistent with the discussion in Section \ref{sec:sigma} and the plot in Fig.~\ref{fig:lipschitz_f}.
\begin{figure}[h!]
\vspace{-0.2cm}
  \centering
  \includegraphics[width=0.4\linewidth]{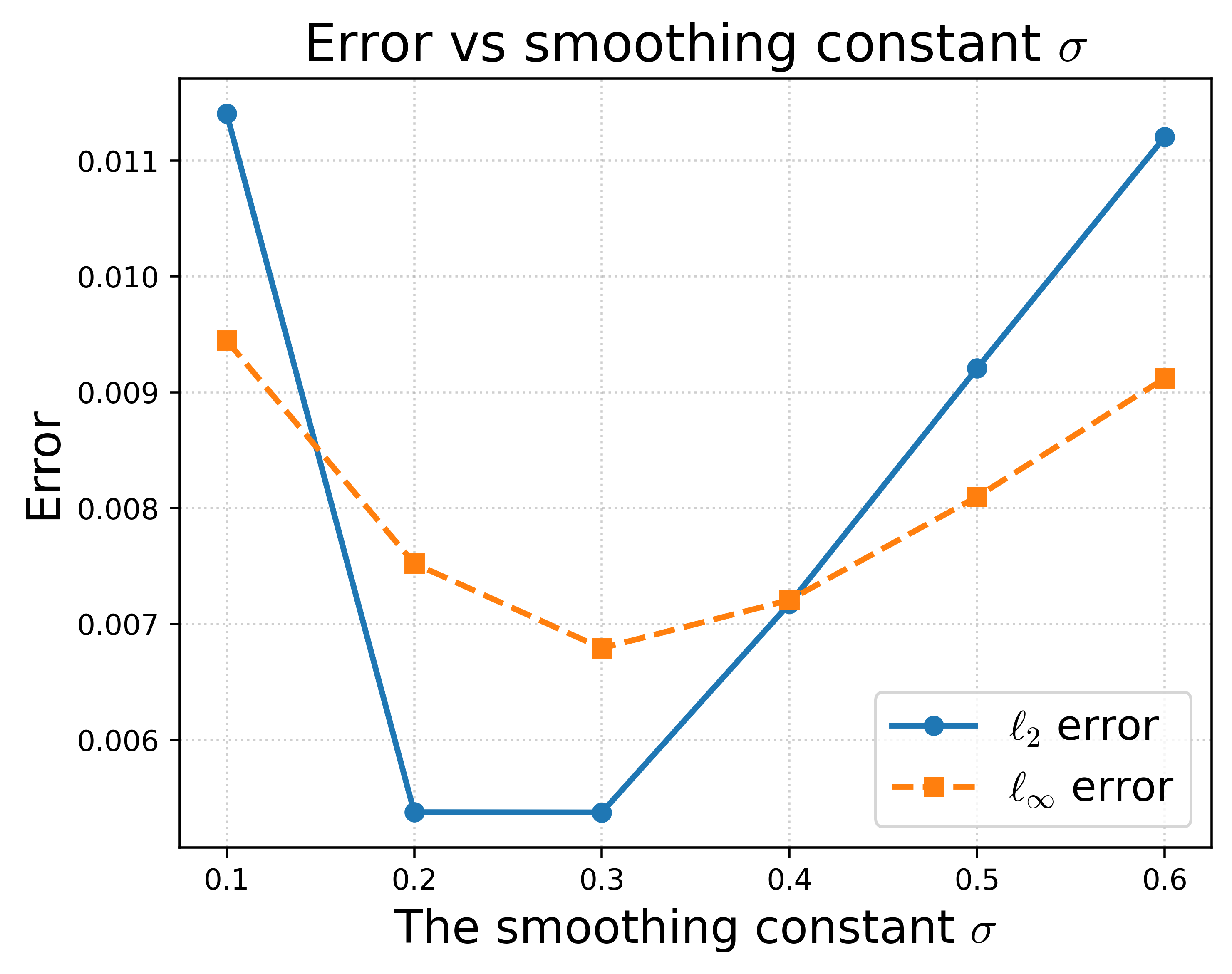}
  \vspace{-0.2cm}
  \caption{Error vs.\ smoothing constant $\sigma$ in Eq.~\eqref{eq:gmm_ex} for fixed $d=10$, $h=0.01$, $M=1$. The $\ell_2$ and $\ell_\infty$ errors decrease for small $\sigma$ and then increase as $\sigma$ grows, which is consistent with the discussion in Section \ref{sec:sigma} and the plot in Fig.~\ref{fig:lipschitz_f}.}
  \label{fig:sigma}
  \vspace{-0.7cm}
\end{figure}

% using a dense symmetric positive definite covariance with all eigenvalues equal to $0.3$.
% For each value we estimate $\mathbb{E}_{Z_1}|z_0 - z^{100}|2$ and $\mathbb{E}_{Z_1}|z_0 - z^{100}|{\infty}$ from $n=1000$ Monte Carlo samples per point, plotting the two norms in Figure~\ref{fig:m0}.
% Across this range both norms grow close to linearly in $M$.
% \begin{figure}[h!]
% \centering
% \includegraphics[width=0.5\linewidth]{figure/Error2/m0/m0_full0.3_2inf_1000.png}
% \caption{Error vs.\ $M$ at $d=10$, $h=0.01$. $\Sigma$ is a dense SPD matrix with all eigenvalues $0.3$. $n=1000$ Monte Carlo samples per point. Both errors grow approximately linearly in $M$.}
% \label{fig:m0}
% \end{figure}

\subsection{Verification of the supervised learning error}\label{sec:experiment2}
To demonstrate the practical effectiveness of the supervised learning strategy used to construct the generative model in~\eqref{eq:transport}, we present additional numerical experiments to assess the relative contributions of the reverse ODE discretization error and the supervised learning error in Eq.~\eqref{eq:error_split}, and to determine which error source is dominant in practice. 

\paragraph{Experiment setup} We use the reverse ODE with the exact score function to generate training pairs, and then train neural networks to directly learn the noise-to-sample map $z_1 \mapsto z_0$. We still use Eq.~\eqref{eq:gmm_ex} as the target distribution while setting $J=10$. 
Each Gaussian component's mean vector has coordinates drawn independently from $\{-1, 1\}$ with equal probability, and we use a shared diagonal covariance matrix with diagonal entries drawn uniformly from $[0.2, 0.4]$. We test across dimensions $d \in \{8, 16, 32, 64, 128, 256, 512, 1024, 2048, 4096, 8192\}$. To generate labeled data in the training set $\mathcal{D}_{\rm train}$, we sample $z_1 \sim \mathcal{N}(0, I_d)$ from the standard Gaussian and solve for the corresponding $z_0$ through the reverse ODE. The reference solution is generated with $h=0.001$ and the approximate solution is generated with $h= 0.01$. 
%
% We generate two versions of $z_0$:
% \vspace{0.3cm}
% \begin{itemize}[leftmargin=15pt]\itemsep0.1cm
%     \item $z_0^{\text{coarse}}$: integrated using forward Euler with step size $h=0.01$
%     \item $z_0^{\text{true}}$: integrated using Heun's method with step size $h=0.001$
% \end{itemize}
% \vspace{0.3cm}
% The coarser $z_0^{\text{coarse}}$ is used during training to represent the practical scenario where exact ODE solutions are computationally expensive, while $z_0^{\text{true}}$ serves as the ground truth reference for evaluation. 
For each dimension, we generate 100,000 samples and split them into 80,000 training samples, 10,000 validation samples, and 10,000 test samples. We repeat each experiment 5 times with different random splits to assess variability. 
%
% {\em Model architecture and training.}
We compare two Multi-layer Perceptron (MLP) architectures with different capacities: MLP-1024 and MLP-2048, which have hidden dimensions of 1024 and 2048, respectively. Both architectures consist of two hidden layers with GELU activation functions. The models are trained using the AdamW optimizer with learning rate $10^{-3}$ and weight decay $10^{-4}$. Training continues until the validation loss fails to improve for 50 consecutive epochs (early stopping), at which point the model with the lowest validation loss is selected. 
% During training, the networks learn the mapping $z_1 \mapsto z_0^{\text{coarse}}$, but are evaluated against $z_0^{\text{true}}$ to measure true prediction accuracy.

\paragraph{Results and discussion}
Fig.~\ref{fig:supervised_error} shows the root mean squared error (RMSE) for different model configurations across dimensions. For each dimension, we compare two types of errors on both training and test sets, i.e., the reverse ODE discretization error and the neural network approximation error. We observe that the reverse ODE discretization error (shown in red) remains consistently small, ranging from approximately 0.01 to 0.05 in RMSE. This error grows slowly with dimension and remains nearly flat across the tested scenarios. 
%
% Most importantly, the baseline ODE disretization error on the training data is orders of magnitude smaller than both the neural network training error and the test error. This indicates that ODE discretization error contributes negligibly to the final error. Consequently, further improving solver accuracy provides diminishing returns, and performance gains should instead be sought through improved neural network model architectures and training strategies.
%
% The ODE solver error (shown in red) remains consistently small, ranging from approximately 0.01 to 0.05 in RMSE. This error grows slowly with dimension and remains nearly flat across the tested range, which could be attributed to the diagonal covariance structure used in the experiment. 
Across all test cases, the total sampling error with respect to the true distribution is dominated by neural network approximation error, which exceeds the ODE discretization error by one to two orders of magnitude, as clearly visible in the log-scale plot (right panel).
This demonstrates that improving ODE solvers beyond a reasonable accuracy threshold yields diminishing returns. Instead, research efforts should prioritize three directions. First, scaling network architectures to handle high-dimensional noise to sample mappings. Second, improving data efficiency through better training procedures or data augmentation. Third, developing architectures that maintain strong generalization as the dimension increases.
\begin{figure}[h!]
\vspace{-0.5cm}
  \centering
\includegraphics[width=.9\linewidth]{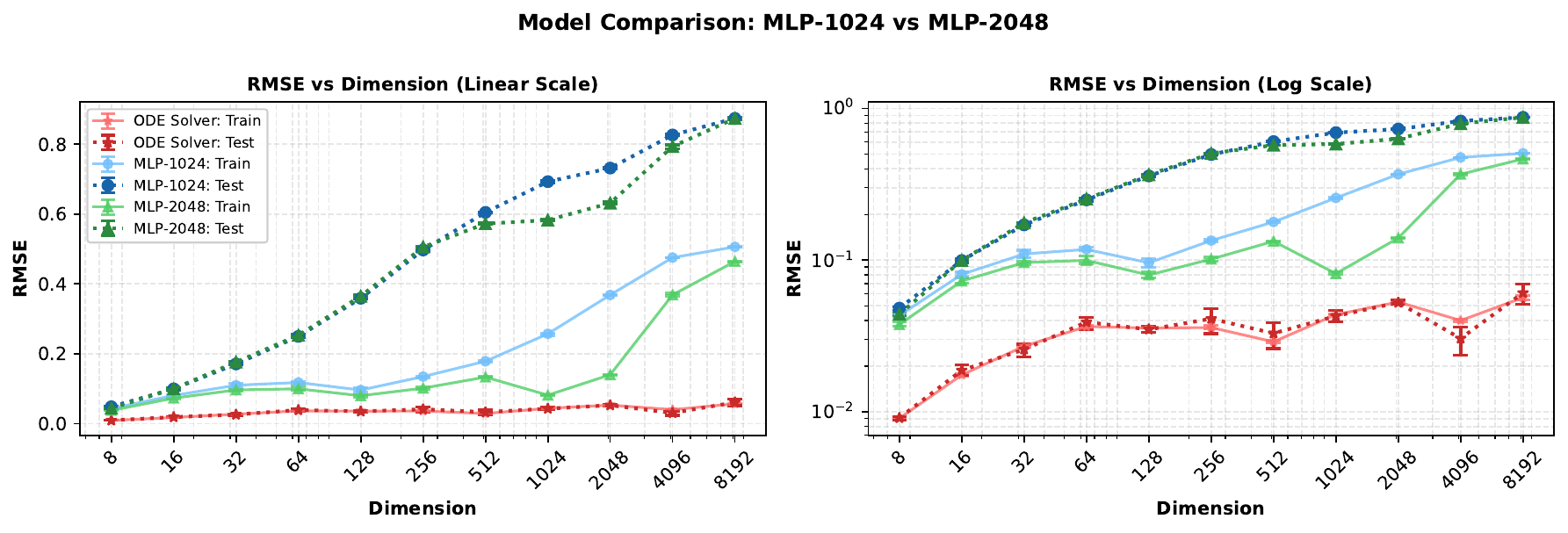}
\vspace{-0.2cm}
  \caption{Root mean squared error (RMSE) comparison across dimensions for supervised learning of the noise-to-sample mapping. The red curves show the reverse ODE discretization error, blue and green curves show neural network prediction errors (MLP-1024 in blue and MLP-2048 in green) on training (solid) and test (dotted) sets. Error bars show the 10th to 90th percentile range across 5 runs. Across all test cases, the total sampling error with respect to the true distribution is dominated by neural network approximation error, which exceeds the ODE discretization error by one to two orders of magnitude.}
  % The error growth exhibits three distinct stages: at low dimensions ($d \leq 16$), both models perform well with minimal bottlenecks; at intermediate dimensions ($d \in [32, 512]$), data insufficiency emerges as the primary limitation with both models showing similar performance; at high dimensions ($d \geq 1024$), model capacity becomes critical with MLP-2048 achieving 15\% lower error than MLP-1024 at dimensions 1024 and 2048. Across all dimensions, neural network approximation errors dominate ODE solver errors by 1 to 2 orders of magnitude.}
  \label{fig:supervised_error}
  \vspace{-0.9cm}
\end{figure}

\section{Conclusion}\label{sec:con}
In this work, we developed a rigorous and numerically verifiable error analysis for a class of training-free diffusion models used to generate labeled data for supervised learning of end-to-end generative samplers. By decomposing the total error into data, discretization, and supervised learning components, we isolated the key source of theoretical uncertainty and focused on the discretization error arising from the reverse-time diffusion process when a Gaussian mixture model is treated as the target distribution.
A central feature of our analysis is the exploitation of analytically available score functions for Gaussian mixture models, which allows us to completely avoid the propagation of score-function approximation errors through the diffusion process. As a result, we recover classical convergence rates associated with standard ODE discretization schemes and obtain error bounds with favorable dependence on the problem dimensionality. Importantly, the derived estimates are not only theoretically rigorous but also directly verifiable through numerical experiments, both with respect to time-step size and dimensionality, thereby addressing a key gap between existing diffusion model theory and observed numerical behavior.

The framework presented in this paper provides a principled foundation for understanding the accuracy and scalability of training-free diffusion models in supervised generative learning. Beyond the specific setting considered here, the proposed error-splitting strategy offers a flexible blueprint for analyzing more general training-free and hybrid diffusion-based methods. Future work will extend the analysis to broader classes of target distributions, higher-order discretization schemes, and adaptive time-stepping strategies, as well as investigate the interaction between diffusion-based data generation and generalization error in supervised learning.

\bibliographystyle{siamplain}
\bibliography{references}
\end{document}